\newtheorem{statement}{statement}[section]
\newtheorem{corollary}[statement]{Corollary}
\newtheorem{lemma}[statement]{Lemma}
\newtheorem{theorem}[statement]{Theorem}
\font\sevenrm=cmr10 scaled 700
\font\ninerm=cmr10 scaled 900
\font\tenrm=cmr10 scaled 1000
\font\fourteenbf=cmbx10 scaled 1400
\font\sectionbf=cmbx10 scaled 1900
\def\smallskip{\vspace{5pt}}
\def\medskip{\vspace{10pt}}
\definecolor{magenta}{rgb}{1,0.8,1}
\definecolor{yellow}{rgb}{1,1,0}
\definecolor{green}{rgb}{0.4,1,0.6}
\definecolor{red}{rgb}{1,0.4,0.4}
\definecolor{blue}{rgb}{0.6,0.8,1} 
\def\m{\colorbox{magenta}}
\def\y{\colorbox{yellow}}
\def\g{\colorbox{green}}
\def\Ext{\hbox{\rm Ext\ \!}}
\def\Int{\hbox{\rm Int\ \!}}
\def\tbs{\overline{\theta}\raisebox{0.5pt}{$^*$}\!\!\!} 
\def\tbss{\overline{\theta}\raisebox{0.5pt}{$\scriptstyle ^*$}\!\!\!} 
\def\Tbs{\overline{\Theta}\raisebox{0.5pt}{$^*$}\!\!\!} 
\def\Tbss{\overline{\Theta}\raisebox{0.5pt}{$\scriptstyle ^*$}\!\!\!} 
\begin{document}

\title{\vspace{-3cm}
\textbf{The Tutte Polynomial\\ of a Morphism of Matroids.\\}
{\fourteenbf 6. A Multi-Faceted Counting Formula\\
\vspace{-8pt} for Hyperplane Regions and Acyclic Orientations}}

\author{Michel Las Vergnas$^{\scriptscriptstyle\diamondsuit}$}
\date{\ninerm May 21, 2012$^{\star}$}

\maketitle

\vspace{2cm}

\begin{abstract}
\tenrm
\baselineskip 12pt
We show that the 4-variable generating function of certain orientation related parameters of an ordered oriented matroid is the evaluation at $(x+u,y+v)$ of its Tutte polynomial.
This evaluation contains as special cases the counting of regions in hyperplane arrangements and of acyclic orientations in graphs.
Several new 2-variable expansions of the Tutte polynomial of an oriented matroid follow as corollaries.

This result hold more generally for oriented matroid perspectives,
with specific special cases the counting of bounded regions in hyperplane arrangements or of bipolar acyclic orientations in graphs.
\par

In corollary, we obtain expressions for the partial derivatives of the Tutte polynomial as generating functions of the same orientation parameters.

\end{abstract}

\vfill
{\tenrm
\baselineskip 12pt
\vskip 0.1cm
\noindent
AMS Classification: Primary: 05B35, Secondary: 52C40\\  

\noindent
Keywords : matroid, oriented matroid, oriented matroid perspective, Tutte polynomial, orientation, activities, state model, generating function, hyperplane arrangement, region, bounded region, graph, acyclic orientation, bipolar acyclic orientation, Tutte polynomial partial derivative\par}

\bigskip
\hrule height .5pt depth 0pt width 6truecm
\medskip

{\ninerm
\noindent $\scriptscriptstyle ^\diamondsuit$ C.N.R.S., Paris\par
\noindent $^\star$ submitted to J. Combinatorial Theory ser. B, Feb. 18, 2012}

\eject

\noindent
{\sectionbf Introduction}

\medskip\noindent
Let $M$ be a matroid on a set $E$.
The Tutte polynomial $t(M;x,y)$ of $M$, equivalent to the generating function for cardinality and rank of subsets of $E$, can be defined by the
closed formula

\medskip\noindent
\hspace{1cm}
$\displaystyle t(M;x,y)=\sum_{A\subseteq E}(x-1)^{r(M)-r_M(A)}(y-1)^{|A|-r_M(A)}$\hfill (1)

\smallskip\noindent
where $r_M(A)$ denotes the rank of $A$ in $M$.

\medskip
As well-known, the Tutte polynomial of a matroid on a linearly ordered set can also be expressed as the generating function of {\it Tutte activities} - internal and external - of bases, 
providing a state model with numerous applications \cite{BO92}\cite{Cr69}\cite{EMM10}\cite{Tu54}.

\medskip\noindent
\hspace{1cm}
$\displaystyle t(M;x,y)=\sum_{B\subseteq E{\hbox{\sevenrm\ basis of }}M}x^{\iota_M(B)}y^{\epsilon_M(B)}$ \hfill (2)

\bigskip
We have introduced in \cite{LV84} a state model for the Tutte polynomial of an oriented matroid on a linearly ordered set as a generating function of {\it orientation activities}.

\medskip\noindent
\hspace{1cm}
$\displaystyle t(M;x,y)=
\sum_{A\subseteq E}
\bigl({x\over 2}\bigr)^{{o^*}_{M}(A)}
\bigl({y\over 2}\bigr)^{o^{\phantom{*}}_M(A)}$ \hfill (3)

\smallskip
Basic definitions and properties of oriented matroids can be found in  \cite{OM99}.

We point out that formula (3) contains as special cases, quoted here in increasing order of generality, the counting $t(2,0)$ of acyclic orientations of graphs by R. Stanley \cite{St73}, of acyclic orientations of regular matroids by T. Brylawski and D. Lucas \cite{BL76}, of regions in hyperplane arrangements by R.O. Winder \cite{Wi66} and T. Zaslavsky \cite{Za75}, of acyclic reorientations of oriented matroids by the author \cite{LV75}.

\medskip
A comparison of the state models (2) and (3) for the Tutte polynomial - one in terms of Tutte activities of bases, the other in terms of orientation activities of subsets - has been a motivation for a series of papers by E. Gioan and the author on the so-called {\it active bijection} \cite{GiLV05}\cite{GiLV07}\cite{GiLV09}\cite{GiLV12a}\cite{GiLV12b}.
The active bijection relates the two types of activities by means of activity preserving mappings with prescribed multiplicities.

Surprisingly enough, the active bijection establishes a relationship between the Tutte polynomial and linear programming \cite{GiLV12b}.

\bigskip
In the paper \cite{GT90}, G. Gordon and L. Traldi exhibit an expansion of the Tutte polynomial of an ordered matroid in terms of Tutte activities  similar to (4)(see Example 3.3).

\medskip\noindent
\hspace{1cm}
$\displaystyle t(M;x,y)=
\sum_{A\subseteq E}
\bigl({x\over 2}\bigr)^{\iota_{M}(A)}
\bigl({y\over 2}\bigr)^{\epsilon_M(A)}$ \hfill (4)

\medskip
In \cite{GT90}, the formula (4) is derived  from a 4-variable expansion for the Tutte polynomial in terms of (generalized) Tutte activities.
One may wonder whether an analogous 4-variable expansion also exists in terms of orientation activities.

\medskip
It turns out that such an expansion does exist.
Its existence and properties constitute the object of the present note.

\medskip
Actually, the formula in \cite{LV84} is given for objects more general than Tutte polynomials of oriented matroids, namely for certain 2-variable specializations of the 3-variable Tutte polynomials of oriented matroid perspectives.
The same level of generality holds here.

The 3-variable {\it Tutte polynomial of a  matroid perspective} has been introduced by the author in 1975 \cite{LV75}.
We have studied its properties in a series of papers: fundamental properties in \cite{LV80} \cite{LV99}, Eulerian partitions in surfaces \cite{LV81}, activities of orientations in \cite{LV84}\cite{LV12}, vectorial matroids in \cite{EtLV04}, computational complexity in \cite{LV07}.

The present 4-variable expansion - see below Theorem \ref{thm:4p} - refines the 2-variable expansion of \cite{LV84} Theorem 3.1.

Let $M\rightarrow M'$ be an oriented matroid perspective on a linearly ordered set $E$. We have

\medskip\noindent
\hspace{1cm}
$\displaystyle t(M,M';x,y,1)=
\sum_{A\subseteq E}
\bigl({x\over 2}\bigr)^{{o^*}_{M'}(A)}
\bigl({y\over 2}\bigr)^{o^{\phantom{*}}_M(A)}$ \hfill (5)

\medskip
Among applications of (5) specific to perspectives are self-dual forms of the counting of bounded regions in hyperplane arrangements, or of bipolar acyclic orientations in graphs, generalizing to oriented matroids results of \cite{GZ83} - see \cite{LV77}\cite{LV84}.

\medskip
In \cite{LV12}, the expansion of $t(x+u,y+v)$ in terms of Tutte activities is obtained from expressions of the partial derivatives as generating functions.
Here, we go the reverse way. 
In Section 3, we obtain expressions of partial derivatives from the 4-variable expansion.

We show on an example how this computation is related to the {\it active partitions} discussed in \cite{GiLV05}\cite{GiLV07}\cite{GiLV12a}.
Details will be published later.

\bigskip

\bigskip\bigskip\noindent
{\sectionbf Matroid Perspectives}

\medskip
Matroid perspectives generalize linear mappings of vector spaces.
For the convenience of the reader, we recall here the relevant definitions and main properties.

\medskip
Let $M$, $M'$ be two matroids on a same set $E$.
We say that they constitute a {\it matroid perspective}, 
denoted by $M\rightarrow M'$, when the identity map on $E$ is a matroid strong map,
that is, if at least one, hence all, of the following equivalent properties (i)-(iv) holds

\smallskip
(i) Any circuit of $M$ is a union of circuits of $M'$.

\smallskip
(ii) Any cocircuit of $M'$ is a union of cocircuits of $M$.

\smallskip
(iii) No circuit of $M$ and cocircuit of $M'$ meet in exactly one element.

\smallskip
(iv) For all $Y\subseteq X\subseteq E$ we have
$$r_M(Y)-r_{M'}(Y)\leq r_M(X)-r_{M'}(X)$$

\medskip
As well-known, a matroid perspective factorizes: 
there is a matroid $N$ on $F$, $E\subseteq F$, 
such that $M=N\setminus (E\setminus F)$ and $M'=N/(E\setminus F)$.

\bigskip
When $M$ $M'$ are two oriented matroids, we say that they constitute an {\it oriented matroid perspective} if at least one, hence all, of the following equi\-valent properties (i$'$)-(iii$'$) holds.

\smallskip
(i$'$) Any circuit of $M$ is a conformal union of (signed) circuits of $M'$.

\smallskip
(ii$'$) Any cocircuit of $M'$ is a conformal union of  (signed) cocircuits of $M$.

\smallskip
(iii$'$) No (signed) circuit of $M$ and (signed) cocircuit of $M'$ have a non empty conformal intersection.

\medskip
Two signed sets $Y\subseteq X$ are {\it conformal} if $Y^+\subseteq X^+$ and $Y^-\subseteq X^-$. 

\medskip
From a topological point of view, property (ii$'$) expresses that the vertices of a pseudohyperplane arrangement representing $M'$ belong to faces of a pseudohyperplane arrangement representing $M$.

Oriented matroid perspectives do not factorize in general as shown by J. Richter-Gebert in \cite{RG93} Corollary 3.5.

\bigskip
The Tutte polynomial of a matroid perspective $M\rightarrow M'$ is defined in \cite{LV99} by the closed formula

\smallskip\noindent
$\displaystyle t(M,M';x,y,z)=$\hfill\\
\null\hfill $\displaystyle =\sum_{A\subseteq E}(x-1)^{r(M')-r_{M'}(A)}(y-1)^{|A|-r-M(A)}z^{r(M)-r(M')-(r_M(A)-r_{M'}(A))}$

\smallskip
Property (iii) ensures that we have indeed non negative powers of $z$.

\section{O-activities and $\Theta$-activities}

Let $M$ be an oriented matroid of a linearly ordered set $E$.

\medskip
The following definitions have been introduced in \cite{LV84}.
An element of $E$ is {\it orientation active} resp. {\it orientation dually-active} in $M$ if it is smallest in some positive circuit resp. positive cocircuit of $M$.
We denote by $O(M)$ the set of orientation active elements of $M$, and by $O^*(M)$ its set of orientation dually-active elements.
We have $O^*(M)=O(M^*)$.

\medskip
We set
$$o(M)=|O(M)|$$
$$o^*(M)=|O^*(M)|$$

\medskip
For short, we say that $o(M)$ and $o^*(M)$ are the {\it o-activities} of $M$.

\medskip
For $A\subseteq E$, we denote by $-_AM$ the oriented matroid obtained from $M$ by reorientation on $A$.
Note that $-_{E\setminus A}M=-_AM$.

\medskip
We refine the o-activities as follows. Set 
$$\Theta_M(A)=O(-_AM)\setminus A$$ 
$$\overline{\Theta}_M(A)=\Theta_M(E\setminus A)=O(-_AM)\cap A$$ 
$$\theta_M(A)=|\Theta_M(A)|$$
$$\overline{\theta}_M(A)=|\overline{\Theta}_M(A)|$$

We have 
$$o_M(A)=\theta_M(A)+\overline{\theta}_M(A)$$

Dually, we set 
$$\Theta^*_M(A)=\Theta_{M^*}(A)=O^*(-_AM)\setminus A$$
$$\Tbs_M(A)=\overline{\Theta}_{M^*}(A)=O^*(-_AM)\cap A$$
$$\theta^*_M(A)=|\Theta^*_M(A)|$$
$$\tbs_M(A)=|\Tbs_M(A)|$$

We have
$$o^*_M(A)=\theta^*_M(A)+\tbs_M(A)$$

\medskip
The 4 parameters 
$\theta_M(A)$, $\overline{\theta}_M(A)$,
$\theta^*_M(A)$, $\tbs_M(A)$,
depending on $A$,
are the 4 $\theta$-{\it activities} of $M$.

\section{A 4-Expansion for the Tutte Polynomial}

We have shown in \cite{LV84} Theorem 3.1 that the generating function of the two $o$-activities of the reorientations of an oriented matroid is the evaluation of its Tutte polynomial at $(2x,2y)$.
Our main result here is that the generating function of the four $\theta$-activities is also an evaluation of the Tutte polynomial.

\bigskip
\begin{theorem}
\label{thm:4m}

Let $M$ be an oriented matroid on a linearly ordered set $E$.
We have 
$$t(M;x+u,y+v)=\sum_{A\subseteq E}
x^{{\theta^*}\!\!\!_M(A)}
u^{\tbss_M(A)}
y^{\theta^{\phantom{*}}_M(A)}
v^{\overline{\theta}^{\phantom{*}}_M(A)}$$
where $t(M;x,y)$ denotes the Tutte polynomial of $M$.
\end{theorem}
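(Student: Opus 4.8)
The plan is to obtain the four-variable formula by refining the two-variable orientation-activity expansion (3) with the binomial theorem. For a subset $A$ put $P=O^*(-_AM)$ and $Q=O(-_AM)$, so that $o^*_M(A)=|P|$ and $o_M(A)=|Q|$. First I would record that $P$ and $Q$ are disjoint: by orthogonality a positive circuit and a positive cocircuit never meet, so no element is simultaneously smallest in a positive circuit and smallest in a positive cocircuit. Consequently the four $\theta$-activities of $A$ merely record how $A$ cuts these two disjoint sets — $\theta^*_M(A)=|P\setminus A|$, $\tbs_M(A)=|P\cap A|$, $\theta_M(A)=|Q\setminus A|$, $\overline{\theta}_M(A)=|Q\cap A|$ — so the monomial contributed by $A$ to the right-hand side is exactly $x^{|P\setminus A|}u^{|P\cap A|}y^{|Q\setminus A|}v^{|Q\cap A|}$.

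Next I would partition the $2^{|E|}$ subsets into groups on which the pair $(P,Q)$ is constant; note that $o^*_M$ and $o_M$ are then constant on each group. The key input is an \emph{equidistribution lemma}: within one group the induced cuts $(A\cap P,\,A\cap Q)$ range over $2^P\times 2^Q$ equally often, say $c$ times each. Granting it, the binomial identity $\sum_{P'\subseteq P}x^{|P|-|P'|}u^{|P'|}=(x+u)^{|P|}$ (and its $Q$-analogue) shows that the group contributes $c\,(x+u)^{|P|}(y+v)^{|Q|}$ to the right-hand side of the theorem. The same group has $c\,2^{|P|+|Q|}$ members, each contributing $\bigl(\tfrac{x+u}{2}\bigr)^{|P|}\bigl(\tfrac{y+v}{2}\bigr)^{|Q|}$ to (3) evaluated at $(x+u,y+v)$, for a total of $c\,(x+u)^{|P|}(y+v)^{|Q|}$ as well. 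Summing the matching contributions over all groups and applying (3) gives $t(M;x+u,y+v)$, as required; as a check, setting $u=x$ and $v=y$ collapses everything back to $\sum_A x^{o^*_M(A)}y^{o_M(A)}$, which is (3).

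The hard part will be the equidistribution lemma, which carries all of the oriented-matroid content. The involution $A\mapsto E\setminus A$ already helps: since $-_{E\setminus A}M=-_AM$ it preserves each group and complements both cuts, giving equidistribution across complementary pairs; the finer symmetries needed to reach full equidistribution are precisely those encoded in the \emph{active partition} of the reorientation class alluded to in the introduction, and assembling them is the delicate step. If isolating this lemma proves awkward, a self-contained alternative is a deletion–contraction induction on the largest element $\omega$ of $E$. One verifies that the right-hand side $T(M)$ satisfies $T(M)=T(M\setminus\omega)+T(M/\omega)$ when $\omega$ is neither a loop nor an isthmus, $T(M)=(y+v)\,T(M\setminus\omega)$ when $\omega$ is a loop (a loop is always active, and lying in or out of $A$ toggles its contribution between $v$ and $y$), and $T(M)=(x+u)\,T(M/\omega)$ when $\omega$ is an isthmus (dually); these match the recursion for $t(M;x+u,y+v)$. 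Here the work is to show that the four $\theta$-activities of the elements below $\omega$ transport correctly under oriented-matroid deletion and contraction, which rests on the behaviour of positive circuits and cocircuits under taking minors.
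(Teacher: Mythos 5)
Your first route — grouping reorientations by the pair $(P,Q)=(O^*(-_AM),O(-_AM))$ and invoking an equidistribution lemma — leaves the entire oriented-matroid content unproved. The lemma you need (that within each group the cuts $(A\cap P,A\cap Q)$ sweep out $2^P\times 2^Q$ with constant multiplicity) is not a technical refinement of formula (3): it is essentially the decomposition of $2^E$ into classes of the \emph{active orientation partition}, each class being an orbit of reorientations along unions of parts of an active partition of $E$ with exactly one active element per part. The paper explicitly defers this machinery (and the resulting proof of Theorem \ref{thm:4m}) to the companion paper \cite{GiLV12a}; it is a theorem at least as deep as the statement you are trying to prove. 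The complementation involution $A\mapsto E\setminus A$ only shows that the multiplicity of a cut equals that of its complementary cut (e.g.\ for $|P|=2$ it identifies the counts of $\emptyset$ and $P$, and of the two singletons, but not these two counts with each other), so it gets you nowhere near full equidistribution. As written, the first route is a restatement of the difficulty, not a proof.

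Your fallback deletion--contraction induction is the paper's actual strategy, but your sketch omits the one step that makes it work. The recursion $T(M)=T(M\setminus e)+T(M/e)$ (for $e$ the greatest element, neither loop nor isthmus) is \emph{not} verified term by term; it is verified by pairing $A$ with $A\cup\{e\}$ for each $A\subseteq E\setminus\{e\}$ and showing
$$f(M;A)+f(M;A\cup\{e\})=f(M\setminus e;A)+f(M/e;A),$$
where $f$ denotes the four-variable monomial. This rests on a genuine dichotomy (Lemma \ref{lem:l1}, reproduced from \cite{LV84}): for \emph{every} $a<e$ simultaneously, either the activity indicators of $M$ agree with those of $M\setminus e$ and those of $-_eM$ with those of $M/e$, or the same with $\setminus e$ and $/e$ exchanged — and which alternative holds depends on $M$, not on $a$. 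Proving this coherence across all $a$ requires a page of circuit-elimination and orthogonality arguments (steps (1)--(11) in the paper); "the activities transport correctly under minors" is precisely the assertion that must be established, and it is false if read as an elementwise statement independent of the $-_e$ pairing. Your loop and isthmus cases are fine (a loop is indeed always orientation-active, contributing $y$ or $v$ according to whether it lies outside or inside $A$, and dually for an isthmus), but without the dichotomy lemma the main case of the induction does not close.
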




\begin{figure}[h] 
{\bf Example 1}\par
\centerline{\includegraphics[scale=0.35]{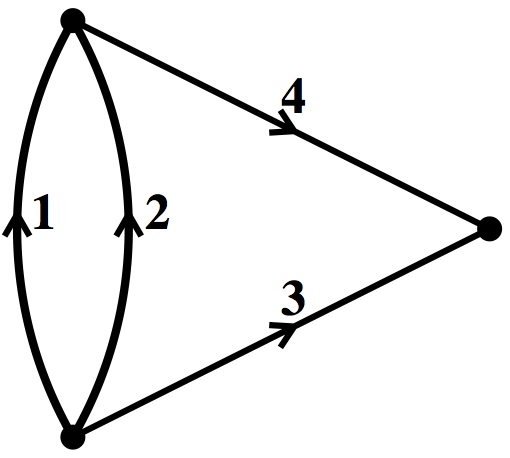}}
\end{figure}

\scalebox{0.8}{
The Tutte polynomial of the cycle matroid of the above 4-edge directed graph 
is}\par
\centerline{\scalebox{0.8}{ $t(x,y)=x^2+xy+y^2+x+y$.}}

\medskip
\centerline{
\(
\begin{array}{|l||l|l||l|l|l|l||c|}
 \hline
\scalebox{0.75}{$A$} & 
\scalebox{0.75}{$O^*(A)$} & 
\scalebox{0.75}{$O(A)$} & 
\scalebox{0.75}{$\Theta^*(A)$} & 
\scalebox{0.75}{$\Tbs\ (A)$} & 
\scalebox{0.75}{$\Theta(A)$} & 
\scalebox{0.75}{$\overline{\Theta}(A)$} & 
\scalebox{0.75}{$
x^{|\Theta^*(A)|}u^{|\Tbss\ (A)|}
y^{|\Theta(A)|}v^{|\overline{\Theta}(A)|}$}
\\ \hline\hline
\emptyset 
     & 13 &    & 13 &    &    &    & \m{$x^2$} \\ \hline
4    & 1  &    & 1  &    &    &    & x \\ \hline
3    &    & 12 &    &    & 12 &    & \g{$y^2$} \\ \hline
34   & 13 &    & 1  & 3  &    &    & \m{$xu$} \\ \hline
2    & 3  & 1  & 3  &    & 1  &    & \y{$xy$} \\ \hline
23   &    & 1  &    &    & 1  &    & y \\ \hline
24   &    & 12 &    &    & 1  & 2  & \g{$yv$}  \\ \hline
234  & 3  & 1  &    & 3  & 1  &    & \y{$uy$}  \\ \hline
1    & 3  & 1  & 3  &    &    & 1  & \y{$xv$}  \\ \hline
14   &    & 1  &    &    &    & 1  & v   \\ \hline
13   &    & 12 &    &    & 2  & 1  & \g{$yv$}  \\ \hline
134  & 3  & 1  &    & 3  &    & 1  & \y{$uv$}  \\ \hline
12   & 13 &    & 3  & 1  &    &    & \m{$xu$}  \\ \hline
124  &    & 12 &    &    &    & 12 & \g{$v^2$} \\ \hline
123  & 1  &    &    & 1  &    &    & u   \\ \hline
1234 & 13 &    &    & 13 &    &    & \m{$u^2$} \\ \hline
\end{array}
\)
}

\medskip
\scalebox{0.8}{
\indent
In accordance with Theorem \ref{thm:4m}, the last column sums up to}

\vskip -1.5pt
\centerline{\scalebox{0.8}{
$t(x+u,y+v)=$\m{$(x+u)^2$}+\y{$(x+u)(y+v)$}+\g{$(y+v)^2$}$+(x+u)+(y+v)$.}}

\vskip -1.5pt
\scalebox{0.8}{
\medskip\indent
n.b. Replacing $A$ by $E\setminus A$ exchanges $x$ and $u$ on one hand, and $y$ and $v$ on the other.}

\vskip -3pt
\scalebox{0.8}{
Hence it would suffice to compute the lines $A$ such that $1\notin A$.}

\smallskip
\centerline{Table 1}

\medskip
Theorem \ref{thm:4m} will be proved in Section 3, as a special case of the more general Theorem \ref{thm:4p}.

\medskip
Theorem \ref{thm:4m} is the orientation counterpart of the result of G. Gordon and L. Traldi for Tutte activities \cite{GT90}, quoted in the introduction as (4) (see also \cite{LV12} Theorem 2.9).
The relationship between orientation activities and Tutte activities is the object of a series of papers by E. Gioan and the author 
\cite{GiLV05}\cite{GiLV07}\cite{GiLV09}\cite{GiLV12a}: the active mapping from reorientations to bases preserves pair of activities.
Theorem \ref{thm:4m} allows to precise the properties of the active mapping:
the associated active bijection from reorientations to subsets preserves 4-uplets of activities.

\bigskip
Theorem \ref{thm:4m} is closely related to the {\it active partition} associated with a basis of an ordered oriented matroid.
The notion of active partition has been introduced by E. Gioan and the author in terms of graphs in \cite{GiLV05}.
The case of general oriented matroids will appear in detail in \cite{GiLV12a} (an extended abstract can be found in \cite{GiLV07}).

With a basis $B$ of $M$ is associated the term $t(M;B;x,y)=x^{\iota_M(B)}y^{\epsilon_M(B)}$ of the basis expansion of its Tutte polynomial.
The active partition associated with $B$ is a partition of $E$ into $\iota_M(B)+\epsilon_M(B)$ classes.
Each class of the active partition is associated with one $B$-active element, either internally or internally.
By reorienting any orientation associated with $B$ on arbitrary  unions of classes of the active partition, we get all the $2^{\iota_M(B)+\epsilon_M(B)}$ reorientations associated with $B$ by the active mapping.
The reorientations associated in this way with $B$ are exactly those yielding the terms of the expansion $t(M;B;x+u,y+v)$ in Theorem \ref{thm:4m}
associated with the subsets in the Dawson interval defined by $B$.

A proof of Theorem \ref{thm:4m} in terms of active partition will appear in \cite{GiLV12a}. 

\bigskip
We recover immediately the matroid case of \cite{LV84} by specializing $x=u$ $y=v$ in Theorem \ref{thm:4m}.

\medskip
\begin{corollary}[\cite{LV84} Theorem 3.1]
Let $M$ be an ordered oriented matroid on a set $E$.
We have
$$t(M;2x,2y)=\sum_{A\subseteq E}x^{o^*(-_AM)}y^{o(-_AM)}$$
\end{corollary}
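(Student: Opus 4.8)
The plan is to derive this corollary directly from Theorem \ref{thm:4m}, since that theorem is the main result stated just above and the corollary is explicitly advertised as the specialization $x=u$, $y=v$. So the proof I would give is a substitution followed by a collapse of the four $\theta$-activities into the two $o$-activities. First I would recall the statement of Theorem \ref{thm:4m}:
$$t(M;x+u,y+v)=\sum_{A\subseteq E}x^{\theta^*_M(A)}u^{\tbss_M(A)}y^{\theta_M(A)}v^{\overline{\theta}_M(A)}.$$

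Next I would set $u=x$ and $v=y$. On the left-hand side this gives $t(M;x+x,y+y)=t(M;2x,2y)$, which is exactly the left-hand side of the corollary. On the right-hand side, the monomial attached to each $A$ becomes
$$x^{\theta^*_M(A)}\,x^{\tbss_M(A)}\,y^{\theta_M(A)}\,y^{\overline{\theta}_M(A)}
=x^{\theta^*_M(A)+\tbss_M(A)}\,y^{\theta_M(A)+\overline{\theta}_M(A)}.$$
The key step is then to invoke the two additivity relations established in Section 1, namely $o^*_M(A)=\theta^*_M(A)+\tbss_M(A)$ and $o_M(A)=\theta_M(A)+\overline{\theta}_M(A)$. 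These let me rewrite the exponents as $o^*_M(A)$ and $o_M(A)$ respectively, yielding the summand $x^{o^*_M(A)}y^{o_M(A)}$.

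Finally I would translate the o-activity notation back into the reorientation language used in the corollary. By the definitions in Section 1, $o_M(A)=|O(-_AM)|=o(-_AM)$ and $o^*_M(A)=|O^*(-_AM)|=o^*(-_AM)$, so the summand equals $x^{o^*(-_AM)}y^{o(-_AM)}$. Summing over all $A\subseteq E$ then gives exactly
$$t(M;2x,2y)=\sum_{A\subseteq E}x^{o^*(-_AM)}y^{o(-_AM)},$$
which is the assertion of the corollary.

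I do not anticipate a genuine obstacle here, since every ingredient is already in place: Theorem \ref{thm:4m} supplies the four-variable identity, and the two additivity formulas for the $\theta$-activities are exactly what is needed to merge the paired variables. The only point requiring care is purely bookkeeping, namely confirming that the specialization $u=x$, $v=y$ correctly matches each $\theta$-activity with its $o$-activity partner ($\theta^*$ with $\tbss$, and $\theta$ with $\overline{\theta}$) rather than crossing them; the definitions make this pairing unambiguous. One could alternatively observe that this collapse is the natural recovery of the coarser o-activity state model from its $\theta$-refinement, but the substitution argument above is the most direct route and is the one I would present.
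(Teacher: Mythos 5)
Your proposal is correct and matches the paper's own derivation exactly: the paper obtains this corollary by specializing $x=u$, $y=v$ in Theorem \ref{thm:4m}, with the Section~1 additivity relations $o^*_M(A)=\theta^*_M(A)+\tbss_M(A)$ and $o_M(A)=\theta_M(A)+\overline{\theta}_M(A)$ collapsing the four $\theta$-activities into the two $o$-activities, just as you spell out. Your extra bookkeeping step identifying $o_M(A)$ with $o(-_AM)$ and $o^*_M(A)$ with $o^*(-_AM)$ is implicit in the paper's definitions and is a harmless clarification.
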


\bigskip
As well-known, the evaluation $t(2,0)$ of the Tutte polynomial counts the number acyclic orientations of graph resp. of regions of a hyperplane arrangement, of acyclic reorientations of an oriented matroid \cite{LV75}\cite{St73}\cite{Wi66}\cite{Za75}.
Setting $x=1$ $u=1$ in the formula of Theorem \ref{thm:4m}, we get indeed that $t(2,0)$ is the number of subsets $A\subseteq E$ such that $-_AM$ has orientation activity 0, i.e. is acyclic.
Setting $x=2$ $u=0$ resp. $x=0$ $u=2$, we get two alternate expansions of $t(2,0)$, in terms of orientation $\theta$-activities.

\medskip
\begin{corollary}
\label{cor:t20-expansion}
Let $M$ be an oriented matroid on a linearly ordered set $E$.
We have
$$t(M;2,0)
=\sum_{\buildrel {A\subseteq E} 
\over {\scriptstyle \tbss_M(A)
=\theta^{\phantom{*}}_M(A)
=\overline{\theta}^{\phantom{*}}_M(A)=0}}
2^{\!\!\ \theta^*\!\!\!_M(A)}
=\sum_{\buildrel {A\subseteq E} 
\over {\scriptstyle \theta^*\!\!\!_M(A)
=\theta^{\phantom{*}}_M(A)
=\overline{\theta}^{\phantom{*}}_M(A)=0}}
2^{\!\!\ \tbss_M(A)}
$$

\end{corollary}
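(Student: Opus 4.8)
The plan is to derive Corollary~\ref{cor:t20-expansion} by directly specializing Theorem~\ref{thm:4m}. Since $t(M;2,0)$ is the value of $t(M;x+u,y+v)$ at any quadruple $(x,u,y,v)$ with $x+u=2$ and $y+v=0$, I would choose such quadruples so that the monomials on the right-hand side collapse onto the desired restricted sums. As a first step I would put $y=0$ and $v=0$; adopting the usual convention $0^0=1$, each summand $x^{\theta^*_M(A)}\,u^{\tbss_M(A)}\,y^{\theta_M(A)}\,v^{\overline{\theta}_M(A)}$ of Theorem~\ref{thm:4m} then vanishes unless $\theta_M(A)=\overline{\theta}_M(A)=0$, in which event the factors $y^{\theta_M(A)}$ and $v^{\overline{\theta}_M(A)}$ are both $1$. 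This already yields
$$t(M;x+u,0)=\sum_{\substack{A\subseteq E\\ \theta_M(A)=\overline{\theta}_M(A)=0}} x^{\theta^*_M(A)}\,u^{\tbss_M(A)}.$$

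To obtain the first expression I would further set $x=2,\ u=0$, whence $x+u=2$; the factor $u^{\tbss_M(A)}$ now annihilates every term with $\tbss_M(A)>0$ and equals $1$ otherwise, leaving exactly the sum of $2^{\theta^*_M(A)}$ over the subsets $A$ with $\tbss_M(A)=\theta_M(A)=\overline{\theta}_M(A)=0$. Symmetrically, the choice $x=0,\ u=2$ lets the factor $x^{\theta^*_M(A)}$ select the terms with $\theta^*_M(A)=0$, producing the second expression.

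The only step demanding attention---and thus the (admittedly mild) main obstacle---is the bookkeeping around the convention $0^0=1$: one must verify that the surviving monomials are precisely those for which the three prescribed $\theta$-activities vanish, and that each such monomial contributes exactly $2$ raised to the one remaining activity. Everything else is pure substitution into the already-established Theorem~\ref{thm:4m}.
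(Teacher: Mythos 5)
Your proposal is correct and matches the paper's own derivation: the paper likewise obtains both expressions by substituting $x=2,\ u=0$ (resp.\ $x=0,\ u=2$) together with $y=v=0$ into Theorem~\ref{thm:4m}, with the same $0^0=1$ convention doing the selection of surviving terms. Nothing further is needed.
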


\bigskip
\section{Generalization to Perspectives}

Theorem \ref{thm:4m} is a special case of a more general theorem dealing with Tutte polynomials of oriented matroid perspectives \cite{LV75}\cite{LV99}.

\medskip
\begin{theorem}
\label{thm:4p}
Let $M\rightarrow M'$ be an oriented matroid perspective on a linearly ordered set $E$.
We have 
$$t(M,M';x+u,y+v,1)=\sum_{A\subseteq E}
x^{{\theta^*}\!\!\!_{M'}(A)}
u^{\tbss_{M'}(A)}
y^{\theta^{\phantom{*}}_M(A)}
v^{\overline{\theta}^{\phantom{*}}_M(A)}$$
where $t(M,M;x,y,z)$ denotes the 3-variable Tutte polynomial of $M\rightarrow M'$.
\end{theorem}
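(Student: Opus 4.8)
The plan is to derive Theorem~\ref{thm:4p} from the two--variable expansion already established for oriented matroid perspectives, namely formula~(5) (that is, \cite{LV84} Theorem 3.1),
\[
t(M,M';X,Y,1)=\sum_{A\subseteq E}\Bigl(\tfrac{X}{2}\Bigr)^{o^*_{M'}(A)}\Bigl(\tfrac{Y}{2}\Bigr)^{o_M(A)} .
\]
Setting $X=x+u$ and $Y=y+v$ reduces the theorem to the purely combinatorial identity
\begin{equation}
\sum_{A\subseteq E} x^{\theta^*_{M'}(A)}\,u^{\tbss_{M'}(A)}\,y^{\theta_M(A)}\,v^{\overline{\theta}_M(A)}
=\sum_{A\subseteq E}\Bigl(\tfrac{x+u}{2}\Bigr)^{o^*_{M'}(A)}\Bigl(\tfrac{y+v}{2}\Bigr)^{o_M(A)},
\tag{$\star$}
\end{equation}
in which no Tutte polynomial appears any longer: only the o--activities and their refinements into $\theta$--activities are involved. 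The special case $M=M'$, where $t(M,M;x,y,z)=t(M;x,y)$, then gives Theorem~\ref{thm:4m}.

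To prove $(\star)$ I would compare the two sides class by class after a suitable partition of the $2^{|E|}$ reorientations $A\subseteq E$. Recall that $o^*_{M'}(A)=\theta^*_{M'}(A)+\tbss_{M'}(A)$ and $o_M(A)=\theta_M(A)+\overline{\theta}_M(A)$ record how the active set $O^*(-_AM')\cup O(-_AM)$ splits into its part lying outside $A$ (the $\Theta$--part) and its part lying inside $A$ (the $\overline{\Theta}$--part). A first ingredient is that this union is disjoint, $O^*(-_AM')\cap O(-_AM)=\emptyset$: an element both smallest in a positive cocircuit of $M'$ and smallest in a positive circuit of $M$ would put a positive circuit of $M$ and a positive cocircuit of $M'$ in nonempty conformal intersection, against the perspective orthogonality~(iii$'$). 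As noted in Example~1, the involution $A\mapsto E\setminus A$ exchanges $\Theta$ with $\overline{\Theta}$ and $\Theta^*$ with $\overline{\Theta}^*$, hence $x$ with $u$ and $y$ with $v$ on the left of $(\star)$; this symmetry is a convenient consistency check.

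The heart of the argument is the fiber structure of the reorientations. I would group the subsets $A$ into the classes on which both $o^*_{M'}(\cdot)$ and $o_M(\cdot)$ are constant, say equal to $o^*$ and $o$; these are exactly the fibers of the active mapping, and each is a coset, under symmetric difference, of the subgroup of $(2^{E},\triangle)$ generated by the $o^*+o$ classes of the active partition (one class per active element). On such a fiber the active sets $O^*(-_AM')$ and $O(-_AM)$ stay fixed, and reorienting on the active--partition class carrying a given active element $e$ toggles precisely whether $e\in A$, leaving the status of every other active element unchanged. Consequently a dually--active $e$ contributes $x$ when it lies outside $A$ and $u$ when inside, an active $e$ contributes $y$ when outside and $v$ when inside, and these toggles are independent over the $o^*+o$ active elements. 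Establishing this fiber/toggle description --- equivalently, the properties of the active partition for oriented matroid perspectives of \cite{GiLV05}\cite{GiLV07}\cite{GiLV12a} --- is the step I expect to be the main obstacle; note that stability of the active set already fails for a reorientation on a lone active element (in Example~1, reorienting $A=\emptyset$ on the active element $1$ changes $O^*$ from $\{1,3\}$ to $\{3\}$) and is restored only for reorientations on entire active--partition classes.

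Granting this description, the computation closes at once. Summing the left--hand monomial of $(\star)$ over one fiber factorizes over the active elements, each dually--active element giving a factor $(x+u)$ and each active element a factor $(y+v)$, so the fiber contributes $(x+u)^{o^*}(y+v)^{o}$. On the right--hand side the same fiber has $2^{o^*+o}$ members, each carrying the constant value $\bigl(\tfrac{x+u}{2}\bigr)^{o^*}\bigl(\tfrac{y+v}{2}\bigr)^{o}$, again totalling $(x+u)^{o^*}(y+v)^{o}$. The two sides thus agree fiber by fiber, which proves $(\star)$ and with it Theorem~\ref{thm:4p}.
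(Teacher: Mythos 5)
Your reduction of the theorem to the identity $(\star)$ is sound, but the step you yourself flag as ``the main obstacle'' is in fact the entire content of the argument, and it is not available. The fiber/toggle description --- that the reorientations split into cosets of the subgroup of $(2^{E},\triangle)$ generated by the active-partition classes, on which the active sets $O^*(-_AM')$ and $O(-_AM)$ stay fixed while reorienting a class toggles exactly the membership in $A$ of its active element --- is precisely the active orientation partition machinery of Gioan and Las Vergnas. The paper states explicitly that, as of its writing, this machinery is developed only for oriented matroids and that its generalization to oriented matroid perspectives is still in progress, so that a proof of Theorem \ref{thm:4p} along these lines cannot be carried out; even for the single-matroid case the relevant reference \cite{GiLV12a} was still in preparation. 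Your proposal therefore rests on an unproven (and, for perspectives, unavailable) structural result that is at least as deep as the theorem itself. A further slip: the classes on which $o^*_{M'}$ and $o_M$ are both constant are in general unions of several fibers of the active mapping (one fiber per basis with the given pair of activities), not single fibers, so the coset description does not apply to them as you state it.

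The paper instead gives a self-contained proof by deletion/contraction on the greatest element $e$. The key input is Lemma \ref{lem:l1}: for every $a\neq e$ the activity indicators of $(M,M')$ and of $(-_eM,-_eM')$ are obtained, in one of two orders, from those of the deletion $M\setminus e\rightarrow M'\setminus e$ and the contraction $M/e\rightarrow M'/e$. Summing over $a\in A$ and $a\in E\setminus\{e\}\setminus A$ yields the additivity $f(M,M';A;\cdot)+f(-_eM,-_eM';A;\cdot)=f(M\setminus e,M'\setminus e;A;\cdot)+f(M/e,M'/e;A;\cdot)$ when $e$ is neither an isthmus of $M'$ nor a loop of $M$, while the isthmus and loop cases produce the factors $(x+u)$ and $(y+v)$; the universality of the Tutte polynomial of a perspective (\cite{LV99} Theorem 5.3) then concludes. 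To salvage your approach you would first have to establish the active-partition decomposition for oriented matroid perspectives, which is a substantially larger project than the theorem you are trying to prove.
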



\begin{figure}[h] 
{\bf Example 2}\par
\centerline{\includegraphics[scale=0.35]{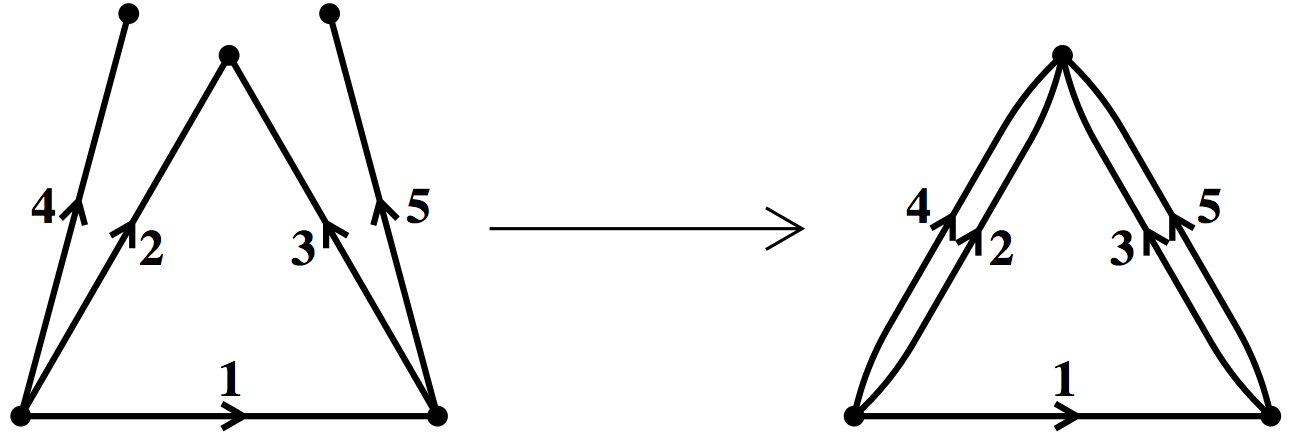}}
\end{figure}

\smallskip
\scalebox{0.8}{
Let us consider the oriented matroid perspective given by the cycle matroids of the above}\par
\noindent
\raisebox{2pt}{
\scalebox{0.8}{graphs. The evaluation at $z=1$ of its Tutte polynomial is}}\par
\centerline{\scalebox{0.8}{$t(x,y,1)=x^2+5x+4y+10$.}}\par
\scalebox{0.8}{The $\theta$-activities are shown on the following table, together with the contributed terms.}
\medskip

\centerline{
\scalebox{.75}{
\(
\begin{array}{||l||l|l||l|l|l|l||c|||l||l|l||l|l|l|l||c|||}
 \hline
\scalebox{0.75}{$A$} & 
\scalebox{0.75}{$O^*(A)$} & 
\scalebox{0.75}{$O(A)$} & 
\scalebox{0.75}{$\Theta^*(A)$} & 
\scalebox{0.75}{$\Tbs\ (A)$} & 
\scalebox{0.75}{$\Theta(A)$} & 
\scalebox{0.75}{$\overline{\Theta}(A)$} & 
\scalebox{0.75}{$t(A)$} &
\scalebox{0.75}{$A$} & 
\scalebox{0.75}{$O^*(A)$} & 
\scalebox{0.75}{$O(A)$} & 
\scalebox{0.75}{$\Theta^*(A)$} & 
\scalebox{0.75}{$\Tbs\ (A)$} & 
\scalebox{0.75}{$\Theta(A)$} & 
\scalebox{0.75}{$\overline{\Theta}(A)$} & 
\scalebox{0.75}{$t(A)$} 
\\ \hline\hline
\emptyset 
     & 12 &    & 12 &    &    &    & \m{$x^2$}  & 1     & 12 &    & 2  & 1  &    &    & \m{$xu$} \\ \hline
5    & 1  &    & 1  &    &    &    & \y{$x$}    & 15    &    &    &    &    &    &    & 1  \\ \hline
4    &    &    &    &    &    &    & 1    & 14    & 1  &    &    & 1  &    &    & \y{$u$}  \\ \hline
45   &    &    &    &    &    &    & 1    & 145   &    &    &    &    &    &    & 1  \\ \hline
3    & 1  &    & 1  &    &    &    & \y{$x$}    & 13    &    & 1  &    &    &    & 1  & \g{$v$}  \\ \hline
35   & 1  &    & 1  &    &    &    & \y{$x$}    & 135   &    & 1  &    &    &    & 1  & \g{$v$}  \\ \hline
34   &    &    &    &    &    &    & 1    & 134   &    & 1  &    &    &    & 1  & \g{$v$}  \\ \hline
345  & 1  &    & 1  &    &    &    & \y{$x$}    & 1345  &    & 1  &    &    &    & 1  & \g{$v$}  \\ \hline
2    &    & 1  &    &    & 1  &    & \g{$y$}    & 12    & 1  &    &    & 1  &    &    & \y{$u$}  \\ \hline
25   &    & 1  &    &    & 1  &    & \g{$y$}    & 125   &    &    &    &    &    &    & 1  \\ \hline
24   &    & 1  &    &    & 1  &    & \g{$y$}    & 124   & 1  &    &    & 1  &    &    & \y{$u$}  \\ \hline
245  &    & 1  &    &    & 1  &    & \g{$y$}    & 1245  & 1  &    &    & 1  &    &    & \y{$u$}  \\ \hline
23   &    &    &    &    &    &    & 1    & 123   &    &    &    &    &    &    & 1  \\ \hline
235  & 1  &    & 1  &    &    &    & \y{$x$}    & 1235  &    &    &    &    &    &    & 1  \\ \hline
234  &    &    &    &    &    &    & 1    & 1234  & 1  &    &    & 1  &    &    & \y{$u$}  \\ \hline
2345 & 12 &    & 1  & 2  &    &    & \m{$xu$}   & 12345 & 12 &    &    & 12 &    &    & \m{$u^2$} \\
\hline 
\end{array}
\)
}}

\medskip
\scalebox{0.8}{
\indent
In accordance with Theorem \ref{thm:4p}, the columns $t(A)$ sum up to}

\vskip -1.5pt
\scalebox{0.8}{
$t(x+u,y+v,1)=$\m{$(x+u)^2$}+\y{$5(x+u)$}+\g{$4(y+v)$}$+10$.}

\smallskip
\centerline{Table 2}

\medskip
As above in Section 2, we recover the main result of \cite{LV84} in the case of matroid perspectives by the specialization $x=u$ $y=v$ 

\medskip
\begin{corollary}[\cite{LV84} Theorem 3.1]
Let $M\rightarrow M'$ be an ordered oriented matroid perpective on a set $E$.
We have
$$t(M,M';2x,2y,1)=\sum_{A\subseteq E}x^{o^*(-_AM')}y^{o(-_AM)}$$
\end{corollary}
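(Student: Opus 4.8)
The plan is to obtain this corollary as the diagonal specialization $u=x$, $v=y$ of Theorem~\ref{thm:4p}, exactly as announced just before the statement. Since both sides of Theorem~\ref{thm:4p} are polynomial identities in the four independent variables $x,u,y,v$, any substitution is legitimate, so the whole argument reduces to bookkeeping of exponents.

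First I would treat the left-hand side. Setting $u=x$ and $v=y$ in $t(M,M';x+u,y+v,1)$ replaces the first argument by $x+x=2x$ and the second by $y+y=2y$, yielding $t(M,M';2x,2y,1)$, which is precisely the left-hand side of the corollary.

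Next I would treat the right-hand side. Under $u=x$ and $v=y$, the generic monomial $x^{\theta^*_{M'}(A)}\,u^{\overline{\theta}^*_{M'}(A)}\,y^{\theta_M(A)}\,v^{\overline{\theta}_M(A)}$ of Theorem~\ref{thm:4p} collapses to $x^{\theta^*_{M'}(A)+\overline{\theta}^*_{M'}(A)}\,y^{\theta_M(A)+\overline{\theta}_M(A)}$, the two starred activities of $M'$ merging into the $x$-exponent and the two unstarred activities of $M$ into the $y$-exponent. It then remains to invoke the additivity relations recorded in Section~1, namely $o^*_{M'}(A)=\theta^*_{M'}(A)+\overline{\theta}^*_{M'}(A)$ and $o_M(A)=\theta_M(A)+\overline{\theta}_M(A)$, together with the fact that $\Theta^*_{M'}(A)=O^*(-_AM')\setminus A$ and $\overline{\Theta}^*_{M'}(A)=O^*(-_AM')\cap A$ form a disjoint partition of $O^*(-_AM')$, and symmetrically that $\Theta_M(A)$ and $\overline{\Theta}_M(A)$ partition $O(-_AM)$. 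This gives $\theta^*_{M'}(A)+\overline{\theta}^*_{M'}(A)=o^*(-_AM')$ and $\theta_M(A)+\overline{\theta}_M(A)=o(-_AM)$, turning the sum into $\sum_{A\subseteq E}x^{o^*(-_AM')}\,y^{o(-_AM)}$, as required.

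There is no genuine obstacle here; the only point demanding care is the pairing of the four $\theta$-activities, that is, checking that it is the two $M'$-dual activities (not one dual and one primal) that recombine into the $x$-exponent, and the two $M$-primal activities into the $y$-exponent, so that the resulting $o$-activities are evaluated in the correct matroid of the perspective. Once this pairing is matched against the definitions of Section~1, the identity is immediate.
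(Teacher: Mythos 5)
Your proposal is correct and follows exactly the paper's own route: the paper derives this corollary by the specialization $x=u$, $y=v$ in Theorem~\ref{thm:4p}, the exponents recombining via the relations $o^*_{M'}(A)=\theta^*_{M'}(A)+\overline{\theta}^*_{M'}(A)$ and $o_M(A)=\theta_M(A)+\overline{\theta}_M(A)$ recorded in Section~1. Your care about pairing the two dual activities of $M'$ into the $x$-exponent and the two primal activities of $M$ into the $y$-exponent is exactly the right check, and nothing further is needed.
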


\bigskip
We recall that given an oriented matroid perspective on a set $E$, the evaluation $t(M,M';0,0,1)$ counts the number of subsets $A\subseteq E$ such that $-_AM$ is acyclic and $-_AM'$ is totally cyclic \cite{LV77}.
This statement can be considered as self dual form of the counting of acyclic reorientations by $t(M;2,0)$.
In the particular case $M'=M/e\oplus{\bf 0}(e)$, where $e$ is a non factor element  of $M$ - neither loop or isthmus, and ${\bf 0}(e)$ the rank 0 matroid on $e$, then $t(M,M';0,0,1)$ counts the {\it number of bounded regions of a hyperplane arrangement} with infinity at $e$, or of bipolar acyclic orientations defined by the edge $e$ in a graph \cite{LV77}\cite{LV84}.

\bigskip
Evaluating Theorem \ref{thm:4p} at $x=y=-1$ and $u=v=1$, we get alternate expansions of $t(M,M';0,0,1)$.

\begin{corollary}
$$t(M,M';0,0,1)=\sum_{A\subseteq E}
(-1)^{{\theta^*}\!\!\!_{M'}(A)+\theta^{\phantom{*}}_M(A)}$$
\end{corollary}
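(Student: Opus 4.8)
The plan is to obtain this corollary as a direct numerical specialization of the polynomial identity in Theorem~\ref{thm:4p}, with no further combinatorial input required. First I would substitute $x=-1$, $u=1$, $y=-1$, $v=1$ into that identity. On the left-hand side these choices give $x+u=0$ and $y+v=0$, so $t(M,M';x+u,y+v,1)$ collapses to $t(M,M';0,0,1)$, which is exactly the quantity appearing on the left of the corollary.

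Next I would simplify the right-hand sum term by term. Since $u=v=1$, every factor $u^{\tbss_{M'}(A)}$ and every factor $v^{\overline{\theta}_M(A)}$ equals $1$ and therefore disappears from its summand. The two surviving factors are $x^{\theta^*_{M'}(A)}=(-1)^{\theta^*_{M'}(A)}$ and $y^{\theta_M(A)}=(-1)^{\theta_M(A)}$; merging these two signs into a single power of $-1$ produces $\sum_{A\subseteq E}(-1)^{\theta^*_{M'}(A)+\theta_M(A)}$, which is precisely the claimed expression. Equating the two evaluated sides completes the argument.

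Because Theorem~\ref{thm:4p} is an equality of polynomials in the indeterminates $x,u,y,v$ (with $z$ frozen at $1$), it remains valid under any numerical substitution, so no analytic or well-definedness concern arises; in particular there is no $0^0$ ambiguity on the right, since the bases after substitution are $-1$ and $1$ rather than $0$. I do not expect any substantive obstacle here, as the corollary is an immediate evaluation of the master formula. The only point that genuinely requires care is pairing the vanishing variables correctly, namely $x$ with $u$ to annihilate the first Tutte argument and $y$ with $v$ to annihilate the second, so that exactly the unbarred activities $\theta^*_{M'}$ and $\theta_M$ are left carrying signs while the barred activities $\tbss_{M'}$ and $\overline{\theta}_M$ are neutralized.
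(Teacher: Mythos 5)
Your proposal is correct and matches the paper's own argument exactly: the paper likewise obtains this corollary by evaluating Theorem~\ref{thm:4p} at $x=y=-1$ and $u=v=1$, so that the left side becomes $t(M,M';0,0,1)$ and the factors carrying exponents $\tbss_{M'}(A)$ and $\overline{\theta}_M(A)$ reduce to $1$. Your added remark about the absence of any $0^0$ ambiguity is a sensible (if unstated in the paper) sanity check.
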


Three similar formulas are obtained for the other different choices of -1 and 1 for the variables.

\bigskip
As of today, the generalization to oriented matroid perspectives of the active bijection and of active partitions by E. Gioan and the author is still in progress.
Hence, the possible proof of Theorem \ref{thm:4m} by methods along these lines mentioned in Section 2 cannot be used for Theorem \ref{thm:4p}.
It turns out that the proof by deletion/contraction of \cite{LV84} requires only slight adjustments to establish Theorem \ref{thm:4p}.

\medskip
For $a\in E$, set $o^*(M;a)=1$ if $a$ is internally active in $M$  and $o^*(M;a)=0$ otherwise, $o(M;a)=1$ if $a$ is externally active in $M$  and $o(M;a)=0$ otherwise.

The main step of the proof of Theorem \ref{thm:4p} is the following lemma
from \cite{LV84} (see Lemma 3.2).

\medskip
\begin{lemma}
\label{lem:l1}
Let $M\rightarrow M'$ be an oriented matroid on a linearly ordered set $E$, with greatest element $e$.
Then (i) or (ii) holds, where

\medskip\noindent
(i) $o^*(M';a)=o^*(M'\setminus e;a)$, $o(M;a)=o(M\setminus e;a)$,
$o^*(-_eM';a)=o^*(M'/e;a)$ and $o(-_eM;a)=o(M/e;a)$ for all $a\in E\setminus\{e\}$,

\medskip\noindent
(ii) $o^*(M';a)=o^*(M'/e;a)$, $o(M;a)=o(M/e;a)$, $o^*(-_eM';a)=o^*(M'\setminus e;a)$
and $o(-_eM;a)=o(M\setminus e;a)$ for all $a\in E\setminus\{e\}$.
\end{lemma}

\medskip
For the convenience of the reader, we reproduce the proof of Lemma \ref{lem:l1}.

\medskip
\begin{proof}
The proof is broken into several steps.

\smallskip
Let $a\in E\setminus \{e\}$.

\medskip
(1) {\it $o(M;a)=1$ or $-_eo(M;a)=1$ implies $o(M/e;a)=1$}.

\smallskip
Let $X$ be a positive circuit of $M$ resp. $-_eM$ with least element $a$.
By a property of contraction in oriented matroids, there is a positive circuit $X'$ of $M/e=(-_eM)/e$ such that $a\in X'\subseteq X$, 
hence $o(M/e;a)=1$.

\medskip
(2) {\it $o(M;a)=1$ and $-_eo(M;a)=1$ implies $o(M\setminus e;a)=1$.}

\smallskip
There exist signed circuits $X$ and $Y$ of $M$ both with least element $a$ such that $X^-=\emptyset$
and $Y^-\subseteq\{e\}$.
Suppose $o(M\setminus e;a)=0$: Then necessarily $e\in X^+\cap Y^-$, hence by the elimination property in oriented matroids, there exists
a positive circuit $Z$ such that $a\in Z\subseteq(X\cup Y)\setminus\{e\}$.
We get $o(M\setminus e;a)=1$, a contradiction.

\medskip
(3) {\it $o(M\setminus e;a)=1$ implies $o(M;a)=-_eo(M;a)=1$.}

\smallskip
The proof is immediate.

\medskip
(4) {\it $o(M/e;a)=1$ implies $o(M;a)=1$ or $-_eo(M;a)=1$.}

\smallskip
Let $X'$ be a positive circuit of $M/e$ with least element $a$.
There exists a signed circuit $X$ of $M$ such that $X'=X\setminus\{e\}$.
Since $e$ is the greatest element of $E$, the element $a$ also smallest in $X$.
We have $X^-\subseteq\{e\}$, hence $X$ is a positive circuit of $M$ or $-_eX$ is a positive circuit of $-_eM$. 

\medskip
(5) {\it $o(M;a)=o(M\setminus e;a)=1$ if and only if $o(-_eM;a)=o(M/e;a)$.}

\smallskip
Suppose $o(M;a)=o(M\setminus e;a)=1$, but $o(-_eM;a)\not=o(M/e;a)$.
We have $o(-_eM;a)=0$ since $o(-_eM;a)=1$ implies $o(M/e;a)=1$  by (1), and $o(M/e;a)=1$.
Then $o(M;a)=1$ by (4), hence $o(M\setminus e;x)=a$, which contradicts (3) since $o(-_eM;a)=0$.

Conversely, suppose $o(-_eM;a)=o(M/e;a)$, but $o(M;a)\not=o(M\setminus e;a)$.
We have $o(M\setminus e;a)=0$ since $o(M\setminus e;a)=1$ implies $o(M;a)=1$ by (3), and $o(M;a)=1$.
Then $o(M/e;a)=1$ by (1), hence $o(-_eM;a)=1$.
But this, together with $o(M;a)=1$ contradicts (2) since $o(M\setminus e;a)=0$.

\medskip
(6) {\it If, for some $b\in E\setminus\{e\}$, we have $o(M;b)\not=o(-_eM;b)$ and $o(M;b)=o(M\setminus e;b)$,
then $o(M;a)=o(M\setminus e;a)$ for all $a\in E\setminus\{e\}$.}

\smallskip
Suppose $o(M;a)\not=o(M\setminus e;a)$ for some $a\in E\setminus\{e,b\}$.
Then $o(M\setminus e;x)=0$ and $o(M;x)=1$ (since $o(M\setminus e;x)=1$ implies $o(M;a)=1$ by (3)).
Hence there is a positive circuit $X$ of $M$ with least element $a$, and necessarily $e\in X$.
On the other hand, $o(M\setminus e;b)=0$ since $o(M\setminus e;b)=1$ implies $o(M;b)=o(-_eM;b)$ by (3).
Hence $o(M;b)=0$, $o(-_eM;b)=1$, and there is a signed circuit $Y$ of $M$ with least element $b$ such that
$Y^-\subseteq\{e\}$.
Necessarily, $e\in X^+\cap Y^-$, since $o(M\setminus e;a)=o(M\setminus e;b)=0$.
Set $c=\hbox{Min}(a,b)$.
By the elimination property in oriented matroids, there is a positive circuit $Z$ of $M$ such that $c\in Z\subseteq(X\cup Y)\setminus\{e\}$.
Clearly, $c$ is the least element of $Z$.
Hence $o(M\setminus e;c)=1$, a contradiction.

\medskip
(7) $o(M;a)=o(-_eM;a)${\it implies} $o(M;a)=o(-_eM;a)=o(M\setminus e;a)=o(M/e;a)$.

\smallskip
Suppose $o(M;a)=o(-_eM;a)$.
If $o(M;a)=0$, then $o(M\setminus e;a)=0$ by (3).
If $o(M;a)=1$, then $o(M\setminus e;a)=1$ by (2).
In both cases, we have $o(M;a)=o(M\setminus e;a)$.
It follows that $o(-_eM;a)=o(M/e;a)$ by (5).

\medskip
(8) {\it We have $o(M;a)=o(M\setminus e;x)$ and $o(-_eM;x)=o(M/e;a)$ for all $a\in E\setminus\{e\}$,
or $o(M;a)=o(M/e;a)$ and $o(-_eM;a)=o(M\setminus e;a)$ for all $a\in E\setminus\{e\}$.}

\smallskip
If $o(M;a)=o(-_eM;a)$ for all $a\in E\setminus\{e\}$, 
then $o(M;a)=o(-_eM;a)=o(M\setminus e;a)=o(M/e;a)$ for all $a\in E\setminus\{e\}$ by (7).
Suppose there is $b\in E\setminus\{e\}$ such that $o(M;b)\not=o(-_eM;b)$.
We have $o(M;y)=o(M\setminus e;y)$ or $o(-_eM;b)=o(M\setminus e;b)$.
Suppose for instance $o(M;b)=o(M\setminus e;b)$.
We have $o(M;a)=o(M\setminus e;a)$ for all $a\in E\setminus\{e\}$ by (6), hence $o(M;a)=o(M/e;a)$ for all $a\in E\setminus\{e\}$ by (5).

\medskip
(9) $o(M;a)=1$ {\it implies} $o^*(M';a)=0$.

\smallskip
Suppose $o(M;a)=1$:
there is a positive circuit $X$ of $M$ with least element $X$.
Since $M\rightarrow M'$ is an oriented matroid perspective, there is a positive circuit $X'$ of $M'$
such that $a\in X'\subseteq X$, hence there is no positive cocircuit of $M'$ containing $a$ by the orthogonality property in oriented matroids, implying $o^*(M';a)=0$.

\medskip
(10) {\it If $o(M;a)\not=o(-_eM;a)$, then $o(M;a)=o(M\setminus e;a)$ implies $o^*(M';a)=o^*(M'/e;a)$.}

\smallskip
Suppose $o^*(M';a)\not=o^*(M'\setminus e;a)$ under the hypothesis of (10).
By ($1^*$) (i.e. by (1) applied to $M^*$) $o^*(M';a)=1$ implies $o^*(M'\setminus e;a)=1$.
Hence $o^*(M';a)=0$ and $o^*(M'\setminus e;a)=1$.
Now by ($4^*$) $o^*(M';a)=1$ or $o^*(-_eM';a)=1$.
Hence $o^*(-_eM';a)=1$.
Then, by (9) $o(-_eM;a)=0$.
It follows that $o(M;a)=1$, hence $o(M\setminus e;a)=1$, by hypothesis, contradicting $o(-_eM;a)=0)$ by (3).

\medskip
(11) We now prove Lemma \ref{lem:l1}.

\smallskip
If $o^*(M';a)=o^*(-_eM';a)$ for all $a\in E\setminus\{e\}$, or  $o(M;a)=o(-_eM;a)$ for  $a\in E\setminus\{e\}$,
then (11) follows clearly from (7), ($7^*$) and (8), ($8^*$).
If there is $a\in E\setminus\{e\}$ such that  $o^*(M';a)\not=o^*(-_eM';a)$ and  $o(M;a)\not=o(-_eM;a)$, 
then Lemma \ref{lem:l1} follows from (8), ($8^*$) and (10).

The remaining possibility is that for all $a\in E\setminus\{e\}$ we have either $o^*(M';a)\not=o^*(-_eM';a)$,
$o(M;a)=o(-_eM;a)$, or $o^*(M';a)=o^*(-_eM';a)$, $o(M;a)\not=o(-_eM;a)$, and both cases occur.
Replacing if necessary, $M\rightarrow M'$ by  $-_eM\rightarrow -_eM'$, we may suppose notation such that
$o(M;b)=o(M\setminus e;b)$ for some $b\in E\setminus\{e\}$ with $o(M;b)\not=o(-_eM;b)$.
Then by (6) $o(M;a)=o(M\setminus e;a)$ for all $a\in E\setminus\{e\}$.

If $a\in E\setminus\{e\}$ is such that $o(M;a)\not=o(-_eM;a)$ we have $o^*(M';a)=o^*(M'\setminus e;a)$ by (10).

Consider now $a\in E\setminus\{e\}$ such that $o(M;a)=o(-_eM;a)$.
We have $o^*(M';a)\not=o^*(-_eM';a)$ by our hypothesis.
If $o^*(M';a)\!\!=\!\!0$, we have \linebreak
$o^*(-_eM';a)=1$, hence there is a cocircuit $X'$ of $M'$ with $X^-=\{e\}$.
By our hypothesis there is $b\in E\setminus\{e\}$ with $o(M;b)=0$, $o(-_eM;b)=1$, implying
the existence of a circuit $Y$ of $M$ with $Y^-=\{e\}$.
Since $M\rightarrow M'$ there is a circuit $Y'$ of $M'$ with $Y'^-=\{e\}$.
Then $X'$ and $Y'$ contradict the orthogonality property.
Therefore $o^*(M';a)=1$ and $o^*(-_eM';a)=0$.
Now by ($1^*$) $o^*(M'\setminus e;a)=1$, hence $o^*(M';a)=o^*(M'\setminus e;a)$.

Thus, for all $a\in E\setminus\{e\}$ we have $o^*(M';a)=o^*(M'\setminus e;a)$ and $o(M;a)=o(M\setminus e;a)$.
Therefore, Lemma \ref{lem:l1} follows from (5) and ($5^*$), or from (8) and ($8^*$).
\end{proof}

\medskip
Set
$$f(M,M';A;x,u,y,v)=
x^{\theta^*_{M'}(A)}
u^{\tbss_{M'}(A)}
y^{\theta^{\phantom{*}}_M(A)}
v^{\overline{\theta}^{\phantom{*}}_M(A)}$$
and
$$f(M,M';x,u,y,v)=\sum_{A\subseteq E}f(M,M';A:x,u,y,v)$$

\medskip
\begin{proof}[Proof of Theorem \ref{thm:4p}]
We distinguish several cases

\smallskip\noindent
(i) {\it $e$ is neither an isthmus of $M'$ nor a loop  of  $M$}\par

\smallskip
Summing up the equalities of Lemma \ref{lem:l1} for all $a\in A$ resp. $a\in 
E\setminus\{e\}\setminus A$, 
and observing that  $o^*(M';e)=o^*(-_eM';e)=o(M;e)=o(-_eM;e)=0$, since $e$ is the greatest element of $E$, 
we get that

\medskip
\(
\begin{array}{ll}
\theta^*_{M'}(A)=\theta^*_{M'\setminus e}(A) &
\tbs_{M'}(A)=\tbs_{M'\setminus e}(A)\\
\theta_M(A)=\theta_{M\setminus e}(A) &
\overline{\theta}_M(A)=\overline{\theta}_{M\setminus e}(A)\\
\theta^*_{-_e{M'}}(A)=\theta^*_{M'/e}(A) &
\tbs_{-_e{M'}}(A)=\tbs_{M'/e}(A)\\
\theta_{-_eM}(A)=\theta_{M/e}(A) &
\overline{\theta}_{-_eM}(A)=\overline{\theta}_{M/e}(A)\\
\end{array}
\)

\medskip
or

\medskip
\(
\begin{array}{ll}
\theta^*_{M'}(A)=\theta^*_{M'/e}(A) &
\tbs_{M'}(A)=\tbs_{M'/ e}(A)\\
\theta_M(A)=\theta_{M/e}(A) &
\overline{\theta}_M(A)=\overline{\theta}_{M/e}(A)\\
\theta^*_{-_e{M'}}(A)=\theta^*_{M'\setminus e}(A) &
\tbs_{-_e{M'}}(A)=\tbs_{M'\setminus e}(A)\\
\theta_{-_eM}(A)=\theta_{M\setminus e}(A) &
\overline{\theta}_{-_eM}(A)=\overline{\theta}_{M\setminus e}(A)\\
\end{array}
\)

\medskip
It follows that

\smallskip\noindent
$\displaystyle f(M,M';A;x,u,y,v)+f(-_eM,-_eM';A;x,u,y,v)=$\hfill\\
\null\hfill $\displaystyle =f(M\setminus e,M'\setminus e;A;x,u,y,v)+f(M/e,M'/e;A;x,u,y,v)$

\smallskip
Summing up for $A\subseteq E\setminus\{e\}$ we get
$$f(M,M';x,u,y,v)=
f(M\setminus e,M'\setminus e;x,u,y,v)+f(M/e,M'/;x,u,y,v)$$

\medskip\noindent
(ii) {\it $e$ is an isthmus of $M'$ (hence also an isthmus of $M$)}

\smallskip
For $A\subseteq E\setminus\{e\}$, we have readily

\smallskip
\(
\begin{array}{ll}
\theta^*_{M'}(A)=\theta^*_{M'\setminus e}(A)+1 &
\tbs_{M'}(A)=\tbs_{M'\setminus e}(A)\\
\theta_M(A)=\theta_{M\setminus e}(A)&
\overline{\theta}_M(A)=\overline{\theta}_{M\setminus e}(A)\\
\end{array}
\)

\smallskip
and 

\smallskip
\(
\begin{array}{ll}
\theta^*_{M'}(A\cup\{e\})=\theta^*_{M'\setminus e}(A) &
\tbs_{M'}(A\cup\{e\})=\tbs_{M'\setminus e}(A)+1\\
\theta_M(A\cup\{e\})=\theta_{M\setminus e}(A)&
\overline{\theta}_M(A\cup\{e\})=\overline{\theta}_{M\setminus e}(A)\\
\end{array}
\)

\smallskip
It follows that
$$f(M,M';A;x,u,y,v)=xf(M\setminus e,M'\setminus e;A;x,u,y,v)$$
and
$$f(M,M';A\cup\{e\};x,u,y,v)=uf(M\setminus e,M'\setminus e;A;x,u,y,v)$$

Therefore,

\smallskip\noindent
$\displaystyle f(M,M';x,u,y,v)=\sum_{A\subset E}f(M,M';A;x,u,y,v)=$\hfill\\
\smallskip
\null\hspace{0.2cm}
\scalebox{0.9}{
$\displaystyle =\sum_{A\subseteq E\setminus\{e\}}f(M,M';A;x,u,y,v)+
\sum_{e\in A\subseteq E}f(M,M';A;x,u,y,v)=$}\hfill\\
\smallskip
\null\hspace{0.2cm}
\scalebox{0.9}{
$\displaystyle =\sum_{A\subseteq E\setminus\{e\}}xf(M\setminus e,M'\setminus e;A;x,u,y,v)+
\sum_{A\subseteq E\setminus\{e\}}uf(M\setminus e,M'\setminus e;A;x,u,y,v)=$}\hfill\\
\smallskip
\null\hspace{0.2cm}
\scalebox{0.9}{
$\displaystyle =xf(M\setminus e,M'\setminus e;x,u,y,v)+
uf(M\setminus e,M'\setminus e;x,u,y,v)=$}\hfill\\
\smallskip
\null\hspace{0.3cm}
$\displaystyle =(x+u)f(M\setminus e,M'\setminus e;x,u,y,v)$

\medskip\noindent
(iii) {\it $e$ is a loop of $M$ (hence also a loop of $M'$)}

\smallskip
As in (ii), we have, dually
$$f(M,M';x,u,y,v)=(y+v)f(M\setminus e,M'\setminus e;x,u,y,v)$$

\medskip\noindent
(iv) We have readily
$$f(\emptyset,\emptyset;x,u,y,v)=1$$

\medskip
Properties (i)-(iv) show that $f(M,M';x,u,y,v)$ verifies the deletion/\break contraction inductive relations satisfied by
$t(M,M';x+u,y+v,1)$.
Therefore, by \cite{LV99} Theorem 5.3, we have
$$f(M,M';x,u,y,v)=t(M,M';x+u,y+v,1)$$

\end{proof}

\bigskip
As first shown by G. Gordon and L. Traldi (see \cite{GT90} Examples 3.1-3.5), and extended by the author to matroid perspectives (see \cite{LV12} Proposition 2.9), many 2-variable expansions of the Tutte polynomial follow readily from an expansion as a 4-variable generating function similar to Theorem \ref{thm:4p}, but in terms of Tutte activities.

The most remarkable are obtained by setting some of $x$ $u$ $y$ $v$ to either 0 or 1, and/or replacing by $x/2$ $y/2$, and performing an appropriate change of variables. 
A total of 25 expansions, 9 different up to reordering, could be thus obtained from Theorem \ref{thm:4p}.

Here, we limit ourselves to three of them, referring the reader to \cite{LV12} for a complete list in the case of Tutte activities.

\bigskip
\begin{corollary}
{\rm M. Las Vergnas 1984 \cite{LV84}}
Let $M$ be an oriented matroid on a linearly ordered set $E$.
We have 
$$t(M;x,y)=
\sum_{A\subseteq E}
\bigl({x\over 2}\bigr)^{{o^*}_{M'}(A)}
\bigl({y\over 2}\bigr)^{o^{\phantom{*}}_M(A)}$$
\end{corollary}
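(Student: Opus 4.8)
The plan is to deduce this formula from Theorem~\ref{thm:4m} by a single specialization followed by a rescaling of the variables; essentially no new argument is required beyond the 4-variable expansion already established.

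I would start from the statement of Theorem~\ref{thm:4m},
$$t(M;x+u,y+v)=\sum_{A\subseteq E} x^{\theta^*_M(A)}\,u^{\tbss_M(A)}\,y^{\theta_M(A)}\,v^{\overline{\theta}_M(A)},$$
and set $u=x$ and $v=y$. On the left-hand side the arguments become $x+u=2x$ and $y+v=2y$, producing $t(M;2x,2y)$. On the right-hand side each pair of factors sharing a common base merges: invoking the additivity relations recorded in Section~1, namely $o^*_M(A)=\theta^*_M(A)+\tbss_M(A)$ and $o_M(A)=\theta_M(A)+\overline{\theta}_M(A)$ — each of which merely reflects the partition of $O^*(-_AM)$, resp. $O(-_AM)$, into its part inside $A$ and its part outside $A$ — the product $x^{\theta^*_M(A)}x^{\tbss_M(A)}$ collapses to $x^{o^*_M(A)}$ and $y^{\theta_M(A)}y^{\overline{\theta}_M(A)}$ collapses to $y^{o_M(A)}$.

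This yields the intermediate identity $t(M;2x,2y)=\sum_{A\subseteq E} x^{o^*_M(A)}\,y^{o_M(A)}$, which is exactly the first corollary of Section~2 (\cite{LV84}, Theorem~3.1), recalling that $o^*_M(A)=o^*(-_AM)$ and $o_M(A)=o(-_AM)$. To reach the stated form I would then substitute $x\mapsto x/2$ and $y\mapsto y/2$, turning the left-hand side into $t(M;x,y)$ and each summand on the right into $\bigl(\tfrac{x}{2}\bigr)^{o^*_M(A)}\bigl(\tfrac{y}{2}\bigr)^{o_M(A)}$, as claimed.

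The step meriting attention is the merging of exponents, but this is bookkeeping rather than a genuine obstacle: it rests solely on the two definitional additivity identities, so the entire content of the corollary is carried by Theorem~\ref{thm:4m}. I would also note that the subscript $M'$ on $o^*$ in the displayed formula should read $M$, since the hypothesis concerns a single oriented matroid $M$; the specialization $u=x$, $v=y$ makes this explicit.
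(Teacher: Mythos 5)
Your proposal is correct and is essentially the paper's own proof, which simply replaces $x$ and $u$ by $x/2$ and $y$ and $v$ by $y/2$ in Theorem~\ref{thm:4m}; your two-step version (first $u=x$, $v=y$, then rescale) is the same specialization, with the exponent merging justified by the definitional identities $o^*_M(A)=\theta^*_M(A)+\tbss_M(A)$ and $o_M(A)=\theta_M(A)+\overline{\theta}_M(A)$ from Section~1. Your remark that the subscript $M'$ in the statement should read $M$ is also correct.
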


\begin{proof}
Replace $x$ and $u$ by $x\over 2$, and $y$ and $v$ by $y\over 2$
in Theorem \ref{thm:4m}.
\end{proof}

\begin{corollary}
Let $M$ be an oriented matroid on a linearly ordered set $E$.
We have 
$$t(M;x,y)=
\sum_{A\subseteq E}
(x-1)^{\theta^*_M(A)}
(y-1)^{\theta^{\phantom{*}}_M(A)}$$
\end{corollary}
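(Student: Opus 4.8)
The plan is to derive this formula directly from the four-variable expansion of Theorem \ref{thm:4m}, in exactly the same spirit as the proof of the preceding corollary, namely by a specialization of the variables followed by a change of variables. The target formula involves only the two $\theta$-activities $\theta^*_M(A)$ and $\theta_M(A)$, whereas the identity of Theorem \ref{thm:4m} carries all four activities. So the first thing I would do is neutralize the two activities that do not appear on the right-hand side of the corollary, namely $\tbss_M(A)$ and $\overline{\theta}_M(A)$, by setting their corresponding variables to $1$.

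Concretely, in the identity
$$t(M;x+u,y+v)=\sum_{A\subseteq E} x^{\theta^*_M(A)}\, u^{\tbss_M(A)}\, y^{\theta_M(A)}\, v^{\overline{\theta}_M(A)}$$
I would substitute $u=1$ and $v=1$. Since $1$ raised to any power is $1$, each factor $u^{\tbss_M(A)}$ and $v^{\overline{\theta}_M(A)}$ collapses to $1$ independently of its exponent, leaving the two-variable identity $t(M;x+1,y+1)=\sum_{A\subseteq E} x^{\theta^*_M(A)}\, y^{\theta_M(A)}$ in the remaining variables $x$ and $y$.

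Finally I would apply the change of variables $x\mapsto x-1$, $y\mapsto y-1$, which is legitimate because the previous line is a polynomial identity valid for all values of $x$ and $y$. The left-hand side becomes $t(M;x,y)$ and the right-hand side becomes $\sum_{A\subseteq E}(x-1)^{\theta^*_M(A)}(y-1)^{\theta_M(A)}$, which is precisely the claimed expansion.

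Since the entire argument consists of two substitutions into an already established identity, I do not anticipate any real obstacle. The one point worth verifying is that putting $u=v=1$ genuinely erases all dependence on $\tbss_M(A)$ and $\overline{\theta}_M(A)$ without collapsing or merging the summands in a way that would corrupt the term-by-term bookkeeping; but because we sum over all $A\subseteq E$ separately and $1^{k}=1$ for every exponent $k$, this holds automatically, so the proof is essentially immediate from Theorem \ref{thm:4m}.
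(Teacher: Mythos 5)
Your proof is correct and is exactly the paper's argument: the paper's one-line proof is ``Replace $x$ by $x-1$, $u$ by $1$, $y$ by $y-1$, and $v$ by $1$ in Theorem \ref{thm:4m},'' which is precisely your two substitutions performed in a single step. Nothing is missing.
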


\begin{proof}
Replace $x$ by $x-1$, $u$ by 1, $y$ by $y-1$, and $v$ by $1$
in Theorem \ref{thm:4m}.
\end{proof}

\begin{corollary}
Let $M$ be an oriented matroid on a linearly ordered set $E$.
We have 
$$t(M;x,y)=
\sum_{\buildrel {A\subseteq E}\over
{\buildrel\scriptstyle {\tbss_M(A)=0}
\over{\scriptstyle {\overline{\theta}^{\phantom{*}}_M(A)=0}}}}
x^{\theta^*_M(A)}
y^{\theta^{\phantom{*}}_M(A)}$$
\end{corollary}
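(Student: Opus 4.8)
The plan is to obtain this corollary, exactly as with the two preceding ones, as a one-step specialization of Theorem~\ref{thm:4m}. First I would substitute $u=0$ and $v=0$ into the four-variable identity
\[
t(M;x+u,y+v)=\sum_{A\subseteq E} x^{\theta^*_M(A)}\,u^{\tbss_M(A)}\,y^{\theta_M(A)}\,v^{\overline{\theta}_M(A)}.
\]
On the left-hand side this yields $t(M;x+0,y+0)=t(M;x,y)$, which is already the desired left-hand side, so that side needs no further manipulation.

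On the right-hand side I would evaluate each summand, viewed as a monomial in the four formal variables, at $u=v=0$. The summand indexed by $A$ carries the factors $u^{\tbss_M(A)}$ and $v^{\overline{\theta}_M(A)}$; since $u^{0}$ and $v^{0}$ are the constant monomial $1$, setting $u=v=0$ turns this factor into $1$ precisely when $\tbss_M(A)=0$ and $\overline{\theta}_M(A)=0$, and into $0$ otherwise. Hence only the subsets $A$ with $\tbss_M(A)=\overline{\theta}_M(A)=0$ survive, each contributing $x^{\theta^*_M(A)}y^{\theta_M(A)}$, and the right-hand side collapses to the restricted sum asserted in the statement.

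The only point to keep honest is the legitimacy of the specialization: Theorem~\ref{thm:4m} is an identity of genuine polynomials in $x,u,y,v$, not merely an identity of numerical evaluations, so substituting any values for the variables---here the boundary values $u=v=0$---is valid. There is essentially no combinatorial obstacle here, since the full content of the corollary is already carried by Theorem~\ref{thm:4m}: the role of $u=v=0$ is simply to annihilate the two ``barred'' activities $\tbss_M$ and $\overline{\theta}_M$---those counting the dually active, respectively active, elements lying inside $A$---by sending their generating variables to zero, while leaving the complementary activities $\theta^*_M$ and $\theta_M$ intact. The only bookkeeping convention in play is $0^{0}=1$, which is nothing more than the fact that the empty product equals $1$.
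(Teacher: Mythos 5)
Your proposal is correct and coincides with the paper's own proof, which simply says to replace $u$ and $v$ by $0$ in Theorem~\ref{thm:4m}; your additional remarks on why only the terms with $\tbss_M(A)=\overline{\theta}_M(A)=0$ survive, and on the validity of specializing a polynomial identity, are exactly the (implicit) content of that one-line argument. Nothing further is needed.
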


\begin{proof}
Replace $u$ and $v$ by 0 in Theorem \ref{thm:4m}.
\end{proof}

\begin{corollary}
Let $M$ be an oriented matroid perspective on a linearly ordered set $E$.
The number of subsets $A\subseteq E$ such that 
$\theta^*_M(A)=\theta^{\phantom{*}}_M(A)=0$ 
is equal to the number of bases of $M$.
\end{corollary}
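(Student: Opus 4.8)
The plan is to obtain the statement as a single numerical specialization of the $4$-variable expansion in Theorem~\ref{thm:4m}, read with the subscripts exactly as they appear in the conclusion (both activities taken in the single ordered oriented matroid $M$, matching the claim about the bases of $M$). First I would substitute $x=0$, $u=1$, $y=0$, $v=1$ into
$$t(M;x+u,y+v)=\sum_{A\subseteq E} x^{\theta^*_M(A)}\,u^{\tbss_M(A)}\,y^{\theta_M(A)}\,v^{\overline{\theta}_M(A)}.$$
Because this is an identity of polynomials in $x,u,y,v$, it may be evaluated freely at any quadruple of scalars; with this choice the left-hand side becomes simply $t(M;1+0,\,1+0)=t(M;1,1)$.

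Next I would examine the right-hand side term by term. Under $x=0$, $u=1$, $y=0$, $v=1$ a generic summand becomes $0^{\theta^*_M(A)}\,1^{\tbss_M(A)}\,0^{\theta_M(A)}\,1^{\overline{\theta}_M(A)}=0^{\theta^*_M(A)}\,0^{\theta_M(A)}$, using the convention $0^0=1$ and $0^{k}=0$ for $k\ge 1$ (equivalently, constant term versus higher-degree monomial in the variables $x$ and $y$). The exponents of $u$ and $v$ play no role since their base is $1$. Hence each $A\subseteq E$ contributes $1$ precisely when $\theta^*_M(A)=0$ and $\theta_M(A)=0$, and contributes $0$ otherwise, so the right-hand side equals the number of subsets $A$ with $\theta^*_M(A)=\theta_M(A)=0$.

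It then remains to identify $t(M;1,1)$ with the number of bases of $M$, which is classical: in the basis-activity expansion~(2) the substitution $x=y=1$ makes every basis contribute $1^{\iota_M(B)}1^{\epsilon_M(B)}=1$, so $t(M;1,1)$ counts the bases; alternatively, in the closed formula~(1) the factors $(x-1)^{r(M)-r_M(A)}(y-1)^{|A|-r_M(A)}$ vanish unless $A$ is simultaneously spanning and independent, i.e. a basis. Combining this with the previous paragraph yields the claimed equality. (One can sanity-check the count against Table~1, where exactly the five subsets $14,134,124,123,1234$ satisfy $\theta^*=\theta=0$, matching $t(1,1)=5$.)

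I do not anticipate a real obstacle: the argument is a direct specialization followed by a standard evaluation, and the only points meriting a line of care are the $0^0=1$ convention when killing the $x$- and $y$-monomials and the remark that a polynomial identity may be evaluated at arbitrary values. For the literal perspective wording one would instead specialize Theorem~\ref{thm:4p} at the same values; there $t(M,M';1,1,1)$ counts the subsets independent in $M$ and spanning in $M'$, which collapse to the bases exactly in the single-matroid case $M=M'$ underlying the stated identity.
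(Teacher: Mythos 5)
Your proposal is correct and follows the route the paper evidently intends (it states this corollary without writing out a proof, but the pattern of the preceding corollaries makes clear it is meant as the specialization $x=y=0$, $u=v=1$ of Theorem~\ref{thm:4m}, equivalently the evaluation at $x=y=1$ of the expansion $t(M;x,y)=\sum_{A}(x-1)^{\theta^*_M(A)}(y-1)^{\theta_M(A)}$, combined with the classical fact that $t(M;1,1)$ counts bases). Your closing remark on the perspective reading via $t(M,M';1,1,1)$ correctly handles the statement's slightly loose wording, and your check against Table~1 is accurate.
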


The same property holds for the three other classes of subsets obtained by duality and exchange of $A$ and $E\setminus A$.

\medskip
The above corollaries generalize straightforwardly to oriented matroid perspectives.

\section{Derivatives}

In \cite{LV12}, we have used state models of Tutte polynomial partial derivatives in terms of internal and external activities to obtain an expansion of $t(M,M';x+u,y+v,z)$.
Here, we go the reverse way, using the expansion of $t(M,M';x+u,y+v,1)$ in terms of orientations given by Theorem \ref{thm:4p} to obtain expansions of partial derivatives.

\begin{theorem}
\label{thm:perspective_derivative}
Let $M\rightarrow M'$ be an oriented matroid perspective on a linearly ordered set $E$. 
We have
$${{\partial^{p+q} t}\over{{\partial x^p}{\partial y^q}}}                                         
(M,M';x,y,1)
=p!q!\sum_{\buildrel {A\subseteq E}\over
{\buildrel\scriptstyle {\tbss_{M'}(A)=p}\over{\scriptstyle {\overline{\theta}_{M}(A)=q}}}}
x^{\theta^*_{M'}(A)}y^{\theta_M(A)}$$
\end{theorem}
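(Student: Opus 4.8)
The plan is to read Theorem \ref{thm:4p} as a polynomial identity in the four independent indeterminates $x,u,y,v$ and to recover the partial derivative by applying the operator $\partial^{p+q}/\partial u^p\partial v^q$ to both sides and then setting $u=v=0$; equivalently, by comparing the coefficients of $u^pv^q$. Write $\phi(X,Y)=t(M,M';X,Y,1)$, so that Theorem \ref{thm:4p} reads $\phi(x+u,y+v)=\sum_{A\subseteq E}x^{\theta^*_{M'}(A)}u^{\tbss_{M'}(A)}y^{\theta_M(A)}v^{\overline{\theta}_M(A)}$. It then remains only to evaluate this operator on each side.

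On the left-hand side, since $\phi(x+u,y+v)$ depends on $u$ only through $x+u$ and on $v$ only through $y+v$, the chain rule gives, as operators on this composite, $\partial/\partial u=\partial/\partial x$ and $\partial/\partial v=\partial/\partial y$, all the Jacobian factors being $1$. Iterating and setting $u=v=0$ yields $\phi^{(p,q)}(x,y)$, the partial derivative of $\phi$ taken $p$ times in its first argument and $q$ times in its second, which is exactly $\partial^{p+q}t/\partial x^p\partial y^q\,(M,M';x,y,1)$. (Differentiation in $x,y$ commutes with the substitution $z=1$, so it is immaterial whether one sets $z=1$ before or after differentiating.)

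On the right-hand side I would differentiate term by term. For a fixed $A$ the $u$-dependence is the single monomial $u^{\tbss_{M'}(A)}$ and the $v$-dependence the single monomial $v^{\overline{\theta}_M(A)}$. Using $\partial^p u^k/\partial u^p|_{u=0}=p!$ when $k=p$ and $0$ otherwise (for a nonnegative integer $k$), and likewise in $v$, the summand for $A$ survives the operator precisely when $\tbss_{M'}(A)=p$ and $\overline{\theta}_M(A)=q$, in which case it contributes $p!\,q!\,x^{\theta^*_{M'}(A)}y^{\theta_M(A)}$. Summing over the surviving subsets and equating with the left-hand side gives the claimed formula.

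There is no genuine obstacle here: once Theorem \ref{thm:4p} is available the statement is a purely formal consequence of it. The only point deserving a word of care is the chain-rule identification of the second paragraph, namely that differentiating the composite $\phi(x+u,y+v)$ in the auxiliary variables $u,v$ reproduces the honest partial derivatives of $t$ in its first two arguments; this is immediate the moment one treats Theorem \ref{thm:4p} as an identity between polynomials in four free variables.
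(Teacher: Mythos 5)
Your proposal is correct and is essentially the paper's own argument: the paper invokes the Taylor formula to write $t(M,M';x+u,y+v,1)=\sum_{p,q}\frac{1}{p!q!}u^pv^q\,\partial^{p+q}t/\partial x^p\partial y^q$ and then compares coefficients of $u^pv^q$ with the expansion of Theorem \ref{thm:4p}, which is exactly your extraction of coefficients by applying $\partial^{p+q}/\partial u^p\partial v^q$ and setting $u=v=0$.
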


\begin{proof}
By Taylor formula, we have
$$t(M,M';x+u,y+v,1)=\sum_{p\geq 0\ q\geq 0}{1\over{p!q!}}u^pv^q
{{\partial^{p+q} t}\over{{\partial x^p}{\partial y^q}}}(M,M';x,y,1)$$

Theorem \ref{thm:perspective_derivative} follows readily from the expansion of $t(M,M';x+u,y+v,1)$ given by Theorem \ref{thm:4p}.
\end{proof}
 
We can easily obtain three alternative expansions.
It suffices to apply differently Taylor formula, for instance with respect to $x$ and $v$ instead of $u$ and $v$.

\medskip
\begin{corollary}
\label{cor:perspective_(x,x)_derivative}
\noindent
Let $M\rightarrow M'$ be a matroid perspective on a linearly ordered set $E$, and $p$ be a non negative integer.
Then
$${{d^p t}\over{dx^p}}(M,M';x,x,1)=
p!\sum_
{\buildrel {A\subseteq E}\over
{\scriptstyle \tbss_{M'}(A)+\overline{\theta}_M(A)=p}}
x^{\theta^*_{M'}(A)+\theta_M(A)}$$
\end{corollary}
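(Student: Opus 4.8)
The plan is to specialize the four-variable identity of Theorem~\ref{thm:4p} to the diagonal and then extract the derivatives by Taylor's formula, in complete parallel with the proof of Theorem~\ref{thm:perspective_derivative}, but with the two pairs of variables collapsed into one. First I would set $y=x$ and $v=u$ in Theorem~\ref{thm:4p}. Since that statement is an equality of polynomials in the four \emph{independent} indeterminates $x,u,y,v$, this substitution is legitimate, and it yields the two-variable identity
$$t(M,M';x+u,x+u,1)=\sum_{A\subseteq E}x^{\theta^*_{M'}(A)+\theta_M(A)}\,u^{\tbss_{M'}(A)+\overline{\theta}_M(A)},$$
where the $x$-exponent merges the contributions of $x^{\theta^*_{M'}(A)}$ and $y^{\theta_M(A)}$, and the $u$-exponent merges those of $u^{\tbss_{M'}(A)}$ and $v^{\overline{\theta}_M(A)}$.

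Next I would recognize the left-hand side as $g(x+u)$, where $g(w)=t(M,M';w,w,1)$ is the one-variable polynomial obtained by restricting the (evaluated) Tutte polynomial to the diagonal $y=x$, $z=1$. As $g$ is a polynomial, Taylor's formula in the increment $u$ about the base point $x$ is an exact finite expansion,
$$g(x+u)=\sum_{p\geq 0}\frac{u^p}{p!}\,g^{(p)}(x)=\sum_{p\geq 0}\frac{u^p}{p!}\,\frac{d^p t}{dx^p}(M,M';x,x,1).$$
The one point of interpretation to keep straight is that $\frac{d^p t}{dx^p}(M,M';x,x,1)$ here means the ordinary $p$-th derivative of the diagonal function $g$, that is, the total derivative in which the identification $y=x$ is made \emph{before} differentiating — in contrast to the mixed partial derivatives of Theorem~\ref{thm:perspective_derivative}.

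Finally I would compare, for each fixed $p$, the coefficients of $u^p$ on the two sides, regarding both as polynomials in $x$. On the Taylor side this coefficient is $\tfrac{1}{p!}\frac{d^p t}{dx^p}(M,M';x,x,1)$, while on the combinatorial side it is $\sum_A x^{\theta^*_{M'}(A)+\theta_M(A)}$, summed over the subsets $A\subseteq E$ with $\tbss_{M'}(A)+\overline{\theta}_M(A)=p$; multiplying through by $p!$ gives the asserted formula. I do not expect a genuine obstacle: the only things to check are the notational convention just noted and the admissibility of the diagonal specialization $y=x$, $v=u$, which holds precisely because Theorem~\ref{thm:4p} is a polynomial identity in four free variables rather than a mere numerical evaluation. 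The three further analogous expansions announced before the statement would follow in exactly the same manner, by specializing and Taylor-expanding with respect to a different pair chosen among $x,u,y,v$.
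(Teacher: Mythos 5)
Your proposal is correct and follows essentially the same route as the paper: the paper likewise specializes to the diagonal $y=x$, $v=u$ (merely routing through the double-sum form of Theorem~\ref{thm:perspective_derivative} rather than substituting directly in Theorem~\ref{thm:4p}, which is an equivalent regrouping) and then applies Taylor's formula in the increment $u$ to identify the coefficient of $u^p$ with $\tfrac{1}{p!}\,\tfrac{d^p t}{dx^p}(M,M';x,x,1)$. Your remark on the admissibility of the diagonal substitution and on reading the left-hand side as the derivative of the diagonal function $g(w)=t(M,M';w,w,1)$ is exactly the right interpretation.
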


\begin{proof}
By Taylor formula, and Theorem \ref{thm:perspective_derivative} we have
$$t(M,M';x+u,y+v,1)
=\sum_{p\geq 0 q\geq 0}u^pv^q\sum_{\buildrel {A\subseteq E}\over
{\buildrel\scriptstyle {\tbs_{M'}(A)=p}\over{\scriptstyle {\overline{\theta}_{M}(A)=q}}}}
x^{\theta^*_{M'}(A)}y^{\theta_M(A)}$$
Hence
$$t(M,M';x+u,x+u,1)
=\sum_{p\geq 0 q\geq 0}u^{p+q}\sum_{\buildrel {A\subseteq E}\over
{\scriptstyle \tbss_{M'}(A)+\overline{\theta}_M(A)=p}}
x^{\theta^*_{M'}(A)+\theta_M(A)}$$
By Taylor formula again
$${{d^p t}\over{dx^p}}(M,M';x+u,x+u,z)=\sum_{k\geq 0}{1\over k!}u^k{{d^p t}\over{dx^p}}(M,M';x,x,z)$$
Corollary \ref{cor:perspective_(x,x)_derivative} follows.
\end{proof}

\bigskip
It turns out that the subsets yielding the expansion of the partial derivatives, that is the subsets $A$ such that $\tbs_{M'}(A)=p$ and 
$\overline{\theta}_{M}(A)=q$, can be simply described. 
In \cite{LV12}, an analogous description for internal and external activities was provided by the Dawson partitions associated to bases or independent/spanning sets.
Here, the key tool are the active partitions, already mentioned in Section 2.

\medskip
As of today, active partitions are fully available only for oriented matroids, the generalization to oriented matroid perspectives being still a work in progress.
Details of the construction for oriented matroids will be given in \cite{GiLV12a}.
We present it briefly on the example of Section 2.

\medskip\noindent
{\bf Example 1} (continued)

\medskip
\centerline{
\scalebox{0.8}{
\(
\begin{array}{|c||c|c|c|c|c|}
\hline
\hbox{basic orientations}&
\begin{array}{c} \phantom{} \\ \phantom{} \end{array} \emptyset \begin{array}{c} \phantom{} \\ \phantom{} \end{array} &2&3&4&23 \\ \hline
\hbox{active edges}& 13\ *&3\ *\ 1&*\ 12& 1\ * & *\ 1 
\begin{array}{c}\phantom{} \\ \phantom{} \end{array} \\ \hline
\hbox{active partition}& 12+34\ *&34\ *\ 12&*\ 1+234& 1234\ * & *\ 1234 
\begin{array}{c}\phantom{} \\ \phantom{} \end{array} \\ \hline
\hbox{active orientation partition}& \emptyset\ 12\ 34\ 1234 &2\ 234\ 1\ 134 & 3\ 13\ 24\ 124 & 4\ 123 & 14\ 23 
\begin{array}{c}\phantom{} \\ \phantom{} \end{array} \\ \hline\hline
t(x,y)=x^2+xy+y^2+x+y&
\begin{array}{c} \emptyset \\ x^2 \end{array}&
\begin{array}{c} 2 \\ xy \end{array}&
\begin{array}{c} 3 \\ y^2 \end{array}&
\begin{array}{c} 4 \\ x \end{array}&
\begin{array}{c} 23 \\ y \end{array}\\ \hline
{{\partial t}\over{\partial x}}(x,y)=2x+y+1&
\begin{array}{cc} 
  \begin{array}{c} 12 \\ xu \end{array}&
  \begin{array}{c} 34 \\ xu \end{array} 
\end{array}&
\begin{array}{c}  1 \\ yu \end{array} &
\begin{array}{c} \\ \end{array} &
\begin{array}{c} 123 \\ u \end{array} &
\begin{array}{c} \\ \end{array}\\ \hline
{{\partial t}\over{\partial y}}(x,y)=x+2y+1&&
\begin{array}{c} 234 \\ xv \end{array}&
\begin{array}{cc} 
  \begin{array}{c} 13 \\ yv \end{array}&
  \begin{array}{c} 24 \\ yv \end{array} 
\end{array}&
\begin{array}{c} \\ \end{array}&
\begin{array}{c} 14 \\ v \end{array}\\ \hline
{1\over 2}{{\partial^2 t}\over{\partial x^2}}(x,y)=1&
\begin{array}{c} 1234 \\ u^2 \end{array}&
\begin{array}{c} \\ \end{array}&
\begin{array}{c} \\ \end{array}&
\begin{array}{c} \\ \end{array}&
\begin{array}{c} \\ \end{array}\\ \hline
{{\partial^2 t}\over{\partial x\partial y}}(x,y)=1&
\begin{array}{c} \\ \end{array}&
\begin{array}{c} 134 \\ uv \end{array}&
\begin{array}{c} \\ \end{array}&
\begin{array}{c} \\ \end{array}&
\begin{array}{c} \\ \end{array}\\ \hline
{1\over 2}{{\partial^2 t}\over{\partial x^2}}(x,y)=1&
\begin{array}{c} \\ \end{array}&
\begin{array}{c} \\ \end{array}&
\begin{array}{c} 124 \\ v^2 \end{array}&
\begin{array}{c} \\ \end{array}&
\begin{array}{c} \\ \end{array}\\ \hline
\end{array}
\)
}}

\medskip
\scalebox{0.8}{
\indent
Table 3 here has to be compared with Table 1 of \cite{LV12}.}

\vskip -3pt
\scalebox{0.8}{
\indent
Similarity is obvious.
The precise relationship made explicit by the active bijection,}

\vskip -3pt
\scalebox{0.8}{
 a 1-1 mapping $2^E\mapsto 2^E$, which provides theorems and algorithms relating the 4}
 
\vskip -3pt
\scalebox{0.8}{
$\theta$-activities to the 4 Tutte activities.}

\centerline{Table 3}

\medskip
We read on Table 1 the 5 orientations $A$ with 
$\tbs\ (A)=\overline{\theta}(A)=0$, namely $\emptyset$ 4 3 2 23.
These orientations have the role played by bases in the Dawson partitions considered in \cite{LV12}.
We call them {\it basic orientations}.

\smallskip
The pairs of dual and primal orientation activities are respectively (2,0) (1,0) (0,2) (1,1) (0,1).
We compute the active partitions as in \cite{GiLV05}, obtaining $12+34*$, 
$1234*$, $*1+234$, $12*34$, $*1234$.
The symbol $*$ separates the dually-active classes (on its left) from the primally-active classes (on its right).
We obtain the orientations in each case by reversing unions of classes of the active partition, i.e. taking symmetric differences, in all possible ways.
The {\it active orientation partition} is the partition of the set of $2^{|E|}$ orientations obtained in this way.

\medskip
Knowing the activities of the basic orientations and which classes have been reversed yields the values of the 4 $\theta$-activities.
When we reverse a dually-active resp. primally-active class, $\theta^*$ decreases by 1 and $\tbs\ $ increases by 1 resp. $\theta$ decreases by 1 and $\overline{\theta}$ increases by 1.
Therefore, the monomial $x^iu^jy^kv^\ell$ associated with the basic orientation is multiplied by $x^{-1}u$ as many times as the number of reversed dually-active classes and by $y^{-1}v$ as many times as the number of reversed primally-active classes.\par

\medskip
More precisely, let $B$ be a basis of $M$, and $X\subseteq\Int_M(B)$, $Y\subseteq\Ext_M(B)$. 
Let $A_B$ be any reorientation in the inverse image of $B$ by the active mapping - produced by an algorithm \cite{GiLV05}\cite{GiLV07}\cite{GiLV09}\cite{GiLV12a}.
Let $A$ be the reorientation obtained from $A_B$ by reversing all elements in the union of the classes of the active partition activated by $X\cup Y$.
Then $A'=B\setminus X\cup Y$ is the subset in the Dawson interval defined by $B$ associated with $A$ by the active bijection.
Using notation of \cite{LV12}, $\theta$-activities of $A$ and Tutte activities of $A'$ are related as follows: 
$\theta^*_M(A)\!=\!cr_M(A')$\ $\tbs_M(A)\!=\!\iota_M(A')$\  
$\theta_M(A)\!=\!nl_M(A')$\ $\overline{\theta}_M(A)\!=\!\epsilon_M(A')$.


\bigskip
\noindent
Michel Las Vergnas\\
Universit\'e Pierre et Marie Curie (Paris 6)\\
case 247 - Institut de Math\'ematiques de Jussieu\\
Combinatoire \& Optimisation\\ 
4 place Jussieu, 75252 Paris cedex 05 (France)\\

\noindent
{\it mlv@math.jussieu.fr}


\begin{thebibliography}{10}

\bibitem{OM99}
A. Bj\"orner, M. Las Vergnas, B. Sturmfels, N. White and  G. Ziegler,
\newblock {\it Oriented matroids}, 2nd edition. 
\newblock Encyclopedia of Mathematics and its Applications 46, 
Cambridge University Press, Cambridge, UK 1999.

\bibitem{BL76}
T. Brylawski, D. Lucas, 
\newblock Uniquely representable combinatorial geometries.
\newblock Colloq. Int. Teorie Combinatorie (Roma 1973), B. SegrŽ ed., Atti dei Convegni Lincei 17 Tomo 1 (Roma 1976), 83--104.

\bibitem{BO92}
T. Brylawski, J. Oxley, 
\newblock The Tutte Polynomial and its Applications.
\newblock Chapter 6 in: N. White (ed.), {\it Matroid Applications}, Cambridge University Press 1992, 123--225.

\bibitem{Cr69}
H.H. Crapo,
\newblock The Tutte polynomial.
\newblock Aequationes Math. 3 (1969), 211--229.

\bibitem{GiLV05}
E. Gioan and  M. Las Vergnas, 
\newblock Activity preserving bijections between spanning trees and orientations in graphs,
\newblock (Special issue  FPSAC 2002)
Discrete Math. 298 (2005), 169-188.

\bibitem{EtLV04}
G. Etienne, M. Las Vergnas,
\newblock The Tutte polynomial of a morphism of matroids.
4. Vectorial matroids.
\newblock Advances in Applied Mathematics 32 (2004), 198--211.

\bibitem{GiLV07}
E. Gioan, M. Las Vergnas, 
\newblock Fully optimal bases and the active bijection in graphs, hyperplane arrangements, and oriented matroids,
\newblock  (Proceedings EuroComb, Sevilla 2007) Electronic Notes in Discrete Mathematics {29} (2007), 365-371.

\bibitem{GiLV09}
E. Gioan, M. Las Vergnas,
\newblock The active bijection in graphs, hyperplane arrangements, and oriented matroids, 1. The fully optimal basis of a bounded region.
\newblock European J. Combinatorics, 30 (2009), 1868--1886.

\bibitem{GiLV12a}
E. Gioan,  M. Las Vergnas, 
\newblock The active bijection in graphs, hyperplane arrangements, and oriented matroids, 2. Decomposition of activities, 
\newblock in preparation.

\bibitem{GiLV12b}
E. Gioan, M. Las Vergnas, 
\newblock The Active bijection in graphs, hyperplane arrangements, and oriented matroids, 3. Linear Programming,
\newblock in preparation.

\bibitem{EMM10}
J.A. Ellis-Monaghan, C. Merino, 
\newblock Graph Polynomials and Their Applications I: The Tutte Polynomial.
\newblock Matthias Dehmer ed., {\it Structural Analysis of Complex Networks}, BirkhaŸser 2010, 219--256.

\bibitem{GT90}
G. Gordon, L. Traldi, 
\newblock Generalized activities and the Tutte polynomial.
\newblock Discrete Maths. 85 (1990), 167--176

\bibitem{GZ83}
C. Greene, T. Zaslavsky,
\newblock On the interpretation of Whitney numbers through arrangements of hyperplanes, zonotopes, non-Radon partitions, and orientations of graphs. 
\newblock Trans. Amer. Math. Soc. 280 (1983), 97Ð-126.

\bibitem{LV75} 
M. Las Vergnas, 
\newblock Matro{\"\i}des orientables, 
\newblock C. R. Acad. Sci. Paris S\'er. A 280 (1975), 61-65.

\bibitem{LV75}
M. Las Vergnas,
\newblock Extensions normales d'un matro{\"\i}de, polyn\^ome de Tutte d'un morphisme.
\newblock C.R. Acad. Sci. Paris s\'er. A 280 (1975), 1479--1482.

\bibitem{LV77}
M. Las Vergnas, 
\newblock Acyclic and totally cyclic orientations of combinatorial geometries. 
\newblock Discrete Math. 20 (1977/78), 51Ð-61.

\bibitem{LV80}
M. Las Vergnas
\newblock On the Tutte polynomial of a morphism of matroids.
\newblock Annals Discrete Mathematics 8 (1980), 7--20.

\bibitem{LV81}
M. Las Vergnas,
\newblock Eulerian circuits of 4-valent graphs imbedded in surfaces.
\newblock in: {\it Algebraic methods in Graph Theory}, Proc Coll. Math. Soc. J{\'a}nos Bolyai 25 (Szeged, Hungary, 1978), North-Holland 1981, 451--477

\bibitem{LV99}
M. Las Vergnas,
\newblock The Tutte polynomial of a morphism of matroids.
1. Set pointed matroids and matroid perspectives.
\newblock Annales de l'Institut Fourier 40 (1999), 973--1015.

\bibitem{LV84} 
M. Las Vergnas, 
\newblock The Tutte polynomial of a morphism of matroids.
2. Activities of orientations.
\newblock {\it Progress in Graph Theory} (Proc. Waterloo Silver Jubilee Conf. 1982), J.A. Bondy \& U.S.R. Murty eds.,
Academic Press, Toronto 1984, 367--380.

\bibitem{LV07}
M. Las Vergnas,
\newblock The Tutte polynomial of a morphism of matroids.
4. Computational complexity.
\newblock Portugaliae Mathematica 64 (2007), 303--309.

\bibitem{LV12} 
M. Las Vergnas, 
\newblock The Tutte polynomial of a morphism of matroids 5. 
Derivatives as generating functions of Tutte activities.
\newblock European J. Combinatorics, to appear, 27 pages.

\bibitem{RG93}
J. Richter-Gebert,
\newblock Oriented Matroids with Few Mutations,
\newblock Discrete Comput Geom 10 (1993), 251-269.

\bibitem{St73} 
R.P. Stanley, 
\newblock Acyclic orientations of graphs,
\newblock Discrete Math.  5  (1973), 171-178.

\bibitem{Tu54}
W.T. Tutte,
\newblock A contribution to the theory of dichromatic polynomials.
\newblock Canadian J. Math., 6 (1954), 80--91.

\bibitem{Wi66}
R.O. Winder,
\newblock Partitions of N-space by hyperplanes.
\newblock SIAM J. Applied Math. 14 (1966), 811-818.

\bibitem{Za75} 
T. Zaslavsky, 
\newblock Facing up to arrangements: Face-count formulas 
for partitions of space by hyperplanes.
\newblock Mem. Amer. Math. Soc. 1 (1975), issue 1, no. 154.



\end{thebibliography}
\end{document}